\pgfplotsset{width=10cm,height=6cm,compat=1.9}
\definecolor{darkblue}{rgb}{0.0, 0.0, 0.55}
\definecolor{bordeaux}{rgb}{0.34, 0.01, 0.1}
\newtheorem{theorem}{Theorem}[section]
\newtheorem{lemma}[theorem]{Lemma}
\newtheorem{definition}[theorem]{Definition}
\newtheorem{example}[theorem]{Example}
\newtheorem{conjecture}[theorem]{Conjecture}
\newtheorem{remark}[theorem]{Remark}
\def\Q{{\mathbb{Q}}}
\def\R{{\mathbb{R}}}
\def\N{{\mathbb{N}}}
\def\x{{\mathbf{x}}}
\def\a{{\boldsymbol{\alpha}}}
\def\b{{\boldsymbol{\beta}}}
\def\ba{{\boldsymbol{a}}}
\def\bv{{\boldsymbol{v}}}
\def\bu{{\boldsymbol{u}}}
\def\bw{{\boldsymbol{w}}}
\def\A{{\mathscr{A}}}
\def\B{{\mathscr{B}}}
\def\supp{\hbox{\rm{supp}}}
\def\SONC{\hbox{\rm{SONC}}}
\def\int{\hbox{\rm{int}}}
\def\New{\hbox{\rm{New}}}
\def\Conv{\hbox{\rm{conv}}}
\begin{document}

\title[A second order cone characterization for SONC]{A second order cone characterization for sums of nonnegative circuits}
\thanks{The first author was supported by China Postdoctoral Science Foundation under grants 2018M641055. 
The second author was supported by the FMJH Program PGMO (EPICS project) and  EDF, Thales, Orange et Criteo. 
Both authors have benefited from the Tremplin ERC Stg Grant ANR-18-ERC2-0004-01 (T-COPS project). 
The second author's work has been supported by European Union’s Horizon 2020 research and innovation programme under the Marie Sklodowska-Curie Actions, grant agreement 813211 (POEMA) as well as from the AI Interdisciplinary Institute ANITI funding, through the French "Investing for the Future – PIA3" program under the Grant agreement n$^{\circ}$ANR-19-PI3A-0004}
\author{Jie Wang and Victor Magron}
\subjclass[2010]{Primary, 14P10,90C25; Secondary, 52B20,12D15}
\keywords{sum of nonnegative circuit polynomials, second-order cone representation, second-order cone programming, polynomial optimization, sum of binomial squares}
\date{\today}

\begin{abstract}
The second-order cone is a class of simple convex cones and optimizing over them can be done more efficiently than with semidefinite programming. 
It is interesting both in theory and in practice to investigate which convex cones admit a representation using second-order cones, given that they have a strong expressive ability. 
In this paper, we prove constructively that the cone of sums of nonnegative circuits (SONC) admits a second-order cone representation. 
Based on this, we give a new algorithm to compute SONC decompositions for certain classes of nonnegative polynomials via second-order cone programming. 
Numerical experiments demonstrate the efficiency of our algorithm for polynomials with a fairly large size.
\end{abstract}

\maketitle
\bibliographystyle{amsplain}

\section{Introduction}
A {\em circuit polynomial} is of the form
$$\sum_{\a\in\A}c_{\a}\x^{\a}-d\x^{\b}\in\R[\x]=\R[x_1,\ldots,x_n],$$
where $c_{\a}>0$ for all $\a\in\A$, $\A\subseteq(2\N)^n$ comprises the vertices of a simplex and $\b$ lies in the interior of this simplex.
The set of {\em sums of nonnegative circuit polynomials (SONC)} was introduced by Iliman and Wolff in \cite{iw} as a new certificate of nonnegativity for sparse polynomials, which is independent of the well-known set of sums of squares (SOS). 
Another recently introduced alternative certificates \cite{ca} are sums of arithmetic-geometric-exponentials (SAGE), which can be obtained via relative entropy programming.
The connection between SONC and SAGE polynomials have been recently studied in \cite{mu,wang,ka19}. 
It happens  that both cones can be unified by a single one \cite{mu}, and that each cone element  has a cancellation-free representation in term of generators \cite{mu,wang}.
 
One of the significant differences between SONC and SOS is that SONC decompositions preserve the sparsity of polynomials while SOS decompositions do not in general \cite{wang}. 
The set of SONC polynomials with a given support forms a convex cone, called a {\em SONC cone}. Optimization problems over SONC cones can be formulated as geometric programs (see \cite{lw} for the unconstrained case and \cite{dlw,diw,dkw} for the constrained case). Numerical experiments for unconstrained polynomial optimization problems in \cite{se} have demonstrated the advantage of the SONC-based methods compared to the SOS-based methods, especially in the high-degree but fairly sparse case.

In the SOS case, there have been several attempts to exploit sparsity occurring in (un-)constrained polynomial optimization problems. 
The sparse variant~\cite{Waki06SparseSOS,Las06SparseSOS} of the moment-SOS hierarchy (also called Lasserre's hierarchy) exploits the correlative sparsity pattern between the input variables to reduce the support of the resulting SOS decompositions.
Such sparse representation results have been successfully applied in many fields, such as optimal power-flow~\cite{Josz16}, roundoff error bounds~\cite{ma17,ma18} and recently extended to the noncommutative case~\cite{ncsparse}.
Another way to exploit sparsity is to consider patterns based on monomial terms (rather than variables), yielding an alternative sparse variant of Lasserre's hierarchy~\cite{tssos}.

One of the similar features shared by SOS/SONC-based frameworks is their intrinsic connections with conic programming: SOS decompositions are computed via semidefinite programming and SONC decompositions via geometric programming. 
In both cases, the resulting optimization problems are solved with interior-point algorithms, thus output approximate nonnegativity certificates. 
However, one can still obtain an exact certificate from such output via hybrid numerical-symbolic algorithms when the input polynomial lies in the interior of the SOS/SONC cone.
One way is to rely on rounding-projection algorithms adapted to the SOS cone \cite{pe} and the SONC cone \cite{ma19}, or alternatively on  perturbation-compensation schemes \cite{univsos,multivsos18} available within the {\tt RealCertify} \cite{RealCertify} library.

In this paper, we study the second-order cone representation of SONC cones. An $n$-dimensional {\em (rotated) second-order cone} is defined as
\begin{equation*}
\mathbf{K}^n:=\{\ba\in\R^{n}\mid2a_1a_2\ge\sum_{i=3}^{n}a_i^2,a_1\ge0,a_2\ge0\}.
\end{equation*}
The second-order cone is well-studied and has mature solvers. Optimizing via second-order cone programming (SOCP) can be handled more efficiently than with semidefinite programming \cite{ah,al}. On the other hand, despite the simplicity of second-order cones, they have a strong ability to express other convex cones (many such examples can be found in \cite[Section 3.3]{ben}). Therefore, it is interesting in theory and also important from the  applications point of view to investigate which convex cones can be expressed by second-order cones.

Given sets of lattice points $\A\subseteq(2\N)^n$, $\B_1\subseteq\Conv(\A)\cap(2\N)^n$ and $\B_2\subseteq\Conv(\A)\cap(\N^n\backslash(2\N)^n)$ ($\Conv(\A)$ is the convex hull of $\A$) with $\A\cap\B_1=\varnothing$, the SONC cone supported on $\A,\B_1,\B_2$ is defined as
\begin{align*}
\SONC_{\A,\B_1,\B_2}:=&\{(\mathbf{c}_{\A},\mathbf{d}_{\B_1},\mathbf{d}_{\B_2})\in\R_+^{|\A|}\times\R_+^{|\B_1|}\times\R^{|\B_2|}\\
&\mid\sum_{\a\in\A} c_{\a}\x^{\a}-\sum_{\b\in\B_1\cup\B_2}d_{\b}\x^{\b}\in\SONC\},
\end{align*}
where $\mathbf{c}_{\A}=(c_{\a})_{\a\in\A}$, $\mathbf{d}_{\B_1}=(d_{\b})_{\b\in\B_1}$ and $\mathbf{d}_{\B_2}=(d_{\b})_{\b\in\B_2}$. The first main result of this paper is the following theorem.
\begin{theorem}\label{mthm}
For $\A\subseteq(2\N)^n$, $\B_1\subseteq\Conv(\A)\cap(2\N)^n$ and $\B_2\subseteq\Conv(\A)\cap(\N^n\backslash(2\N)^n)$ with $\A\cap\B_1=\varnothing$, the convex cone $\SONC_{\A,\B_1,\B_2}$ admits a second-order cone representation.
\end{theorem}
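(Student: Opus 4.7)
The strategy is to exhibit $\SONC_{\A,\B_1,\B_2}$ as the projection of a set cut out by linear equalities and rotated second-order cone inequalities. The argument proceeds in three stages: (i) reduce membership to a decomposition into individual nonnegative circuits; (ii) express the nonnegativity of each circuit as a weighted AM-GM inequality with rational exponents; and (iii) encode each such inequality via a tower of rotated second-order cones.

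For stage (i), I would enumerate, for every $\b\in\B_1\cup\B_2$, the finitely many subsets $\A'\subseteq\A$ whose convex hull is a simplex containing $\b$ in its relative interior. To each such pair $(\b,\A')$ I attach nonnegative variables $c_{\a,\b,\A'}$ (for $\a\in\A'$) and a scalar $d_{\b,\A'}$ that is constrained to be nonnegative when $\b\in\B_1$, together with the linear equalities $c_\a=\sum_{(\b,\A')\colon\a\in\A'}c_{\a,\b,\A'}$ and $d_\b=\sum_{\A'}d_{\b,\A'}$. Cancellation-free decomposition results for SONC \cite{wang,mu} ensure that $(\mathbf{c}_\A,\mathbf{d}_{\B_1},\mathbf{d}_{\B_2})\in\SONC_{\A,\B_1,\B_2}$ if and only if the auxiliaries can be chosen so that each elementary tuple $(\{c_{\a,\b,\A'}\}_\a,d_{\b,\A'})$ is the coefficient vector of a nonnegative circuit polynomial supported on $\A'\cup\{\b\}$; in particular, no outer support outside $\A$ is required.

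For stage (ii), the Iliman--Wolff characterisation reduces nonnegativity of each such circuit to the weighted AM-GM inequality $|d_{\b,\A'}|\le\prod_{\a\in\A'}(c_{\a,\b,\A'}/\lambda_\a)^{\lambda_\a}$, where $(\lambda_\a)_\a$ are the barycentric coordinates of $\b$ with respect to $\A'$. The absolute value is automatic for $\b\in\B_1$ and is handled for $\b\in\B_2$ by adding an auxiliary $t_{\b,\A'}\ge 0$ with $-t_{\b,\A'}\le d_{\b,\A'}\le t_{\b,\A'}$. Because all vertices and $\b$ are lattice points, the coordinates $\lambda_\a$ are rational; clearing a common denominator $N$ turns the inequality into the monomial form $t^{N}\prod_\a\lambda_\a^{k_\a}\le\prod_{\a\in\A'}c_{\a,\b,\A'}^{k_\a}$ with integer exponents $k_\a\in\N$ satisfying $\sum_\a k_\a=N$.

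For stage (iii), I would invoke the classical second-order cone representation of weighted geometric means from \cite[Section~3.3]{ben}: any inequality $t^{N}\le\prod_j y_j^{k_j}$ among nonnegative variables with $\sum_j k_j=N$ admits an exact lift along a binary tree, each internal node contributing one rotated second-order cone constraint $2uv\ge w^{2}$, possibly after padding with copies of $t$ so that the total number of leaves is a power of two. Assembling the linear equalities from stage (i) with the second-order cone towers from stage (iii) yields the desired representation. I expect the main obstacle to lie in stage (iii): the padding step and the bookkeeping of intermediate geometric means must be handled uniformly across all circuits so as to certify exactness of the tower rather than mere relaxation; although the procedure is classical, applying it systematically across every support configuration concentrates the technical work.
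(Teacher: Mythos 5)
Your proposal is correct in substance, but it proves the theorem by a genuinely different route than the paper for the key encoding step. Stage (i) coincides with the paper's use of the cancellation-free decomposition of \cite{wang}: both arguments reduce membership in $\SONC_{\A,\B_1,\B_2}$ to a splitting of the coefficients among finitely many circuits supported on simplices with vertices in $\A$. The divergence is in how each circuit's nonnegativity is made second-order-cone representable. You apply the Iliman--Wolff circuit-number criterion directly and encode the resulting weighted AM--GM inequality $|d|\le\prod_{\a}(c_{\a}/\lambda_{\a})^{\lambda_{\a}}$ via the classical Ben-Tal--Nemirovski binary tree for geometric means; the paper instead constructs an $\A$-rational mediated set containing $\b$ (Lemmas \ref{sec4-lm3}--\ref{sec4-lm5}), rewrites the circuit as a sum of binomial squares with rational exponents (Theorems \ref{sec3-thm3} and \ref{sec3-thm5}), and observes that each binomial square is a $2\times2$ PSD condition, i.e.\ one rotated second-order cone. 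Your route is shorter and avoids mediated sets entirely, but it buys only the existence statement: padding the tree to $2^{\lceil\log_2 N\rceil}$ leaves produces on the order of $N$ cones per circuit, exponential in the bit size of the barycentric denominators, whereas the mediated-set construction yields $O((\log_2 N)^2)$ points per segment and, more importantly, an explicit sum-of-binomial-squares certificate that the paper then exploits algorithmically in Section 5. Two small repairs are needed in your stage (i): the equality $c_{\a}=\sum_{(\b,\A')\,:\,\a\in\A'}c_{\a,\b,\A'}$ forces $c_{\a}=0$ whenever $\a$ belongs to no selected simplex (e.g.\ when $\B_1\cup\B_2=\varnothing$), so you must either relax it to an inequality or add a nonnegative slack accounting for the leftover monomial squares $\sum_{\a\in\tilde{\A}}c_{\a}\x^{\a}$; and the exactness of the padded tower (that the intermediate nodes can always be chosen to achieve equality) should be stated as the lemma it is, though it is indeed classical.
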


The proof of Theorem \ref{mthm} is constructive and involves writing a SONC polynomial as a sum of binomial squares with rational exponents (Theorem \ref{sec3-thm5}). This enables us to propose a new algorithm, based on SOCP, providing SONC decompositions for a particular class of nonnegative polynomials, which in turn yields lower bounds for unconstrained polynomial optimization problems.
We test the algorithm on various randomly generated polynomials up to a fairly large size, involving  $n \sim 40$ variables and of degree $d \sim 60$. The numerical results demonstrate the efficiency of our algorithm.

The rest of this paper is organized as follows. In Section 2, we list some preliminaries on SONC polynomials. In Section 3, we reveal a key connection between SONC polynomials and sums of binomial squares by introducing the notion of $\A$-rational mediated sets. By virtue of this connection, we obtain second-order cone representations for SONC cones in Section 4. In Section 5, we provide an algorithm which outputs SONC decompositions for certain classes of nonnegative polynomials via SOCP. Numerical experiments are provided in Section 6.

\section{Preliminaries}
Let $\R[\x]=\R[x_1,\ldots,x_n]$ be the ring of real $n$-variate polynomial, and let $\R_+$ be the set of positive real numbers. For a finite set $\A\subseteq\N^n$, we denote by $\Conv(\A)$ the convex hull of $\A$. Given a finite set $\A\subseteq\N^n$, we consider polynomials $f\in\R[\x]$ supported on $\A\subseteq\N^n$, i.e., $f$ is of the form $f(\x)=\sum_{\a\in \A}c_{\a}\x^{\a}$ with $c_{\a}\in\R, \x^{\a}=x_1^{\alpha_1}\cdots x_n^{\alpha_n}$. The support of $f$ is $\supp(f):=\{\a\in \A\mid c_{\a}\ne0\}$ and the Newton polytope of $f$ is defined as $\New(f):=\Conv(\supp(f))$. For a polytope $P$, we use $V(P)$ to denote the vertex set of $P$ and use $P^{\circ}$ to denote the interior of $P$. For a set $A$, we use $\#A$ to denote the cardinality of $A$.

A polynomial $f\in\R[\x]$ is nonnegative over $\R^n$ is called a {\em nonnegative polynomial}, or a {\em positive semi-definite (PSD) polynomial}.

The following definition of circuit polynomials was proposed by Iliman and De Wolff in \cite{iw}.
\begin{definition}
A polynomial $f\in\R[\x]$ is called a {\em circuit polynomial} if it is of the form
\begin{equation}\label{nc-eq}
f(\x)=\sum_{\a\in\A}c_{\a}\x^{\a}-d\x^{\b},
\end{equation}
and satisfies the following conditions:
\begin{enumerate}[(i)]
    \item $\A\subseteq(2\N)^n$ comprises the vertices of a simplex;
    \item $c_{\a}>0$ for each $\a\in\A$;
    \item $\b\in\Conv(\A)^{\circ}\cap\N^n$.
\end{enumerate}
\end{definition}

For a circuit polynomial $f=\sum_{\a\in\A}c_{\a}\x^{\a}-d\x^{\b}$, from the definition we can uniquely write
\begin{equation}
\b=\sum_{\a\in\A}\lambda_{\a}\a\textrm{ with } \lambda_{\a}>0 \textrm{ and } \sum_{\a\in\A}\lambda_{\a}=1.
\end{equation}
Then we define the corresponding {\em circuit number} as $\Theta_f:=\prod_{\a\in\A}(c_{\a}/\lambda_{\a})^{\lambda_{\a}}$. The nonnegativity of the circuit polynomial $f$ is decided by its circuit number alone, that is, $f$ is nonnegative if and only if either $\b\notin(2\N)^n$ and $|d|\le\Theta_f$, or $\b\in(2\N)^n$ and $d\le\Theta_f$ (\cite[Theorem 3.8]{iw}).

\begin{remark}
To provide a concise narrative, we also view a monomial square as a nonnegative circuit polynomial.
\end{remark}

An explicit representation of a polynomial being a {\em sum of nonnegative circuit polynomials}, or {\em SONC} for short, provides a certificate for its nonnegativity. Such a certificate is called a {\em SONC decomposition}. For simplicity, we also denote the set of SONC polynomials by SONC.

For a polynomial $f\in\R[\x]$, let
$$\Lambda(f):=\{\a\in\supp(f)\mid\a\in(2\N)^n\textrm{ and }c_{\a}>0\}$$
and $\Gamma(f):=\supp(f)\backslash\Lambda(f)$. Then we can write $f$ as
$$f=\sum_{\a\in\Lambda(f)}c_{\a}\x^{\a}-\sum_{\b\in\Gamma(f)}d_{\b}\x^{\b}.$$

For each $\b\in\Gamma(f)$, let
\begin{equation}\label{fb}
\mathscr{F}(\b):=\{\Delta\mid\Delta\textrm{ is a simplex, } \b\in\Delta^{\circ}, V(\Delta)\subseteq\Lambda(f)\}.
\end{equation}
In \cite[Theorem 5.5]{wang}, it was proved that if $f\in\SONC$, then $f$ admits a SONC decomposition
\begin{equation}\label{sec2-eq1}
f=\sum_{\b\in\Gamma(f)}\sum_{\Delta\in\mathscr{F}(\b)}f_{\b\Delta}+\sum_{\a\in\tilde{\A}}c_{\a}\x^{\a},
\end{equation}
where $f_{\b\Delta}$ is a nonnegative circuit polynomial supported on $V(\Delta)\cup\{\b\}$ for each $\Delta$ and $\tilde{\A}=\{\a\in\Lambda(f)\mid\a\notin\cup_{\b\in\Gamma(f)}\cup_{\Delta\in\mathscr{F}(\b)}V(\Delta)\}$.

\section{SONC polynomials and sums of binomial squares}
In this section, we give a characterization of SONC polynomials in terms of sums of binomial squares with rational exponents.

\subsection{Rational mediated sets}
\label{sec:ratmediatedsets}
A lattice point $\a\in\N^n$ is {\em even} if it is in $(2\N)^n$. For a subset $M\subseteq\N^n$, define
$$\overline{A}(M):=\{\frac{1}{2}(\bv+\bw)\mid\bv\ne\bw,\bv,\bw\in M\cap(2\N)^n\}$$
as the set of averages of distinct even points in $M$. A subset $\A\subseteq(2\N)^n$ is called a {\em trellis} if $\A$ comprises the vertices of a simplex. For a trellis $\A$, we say that $M$ is an {\em $\A$-mediated set} if $\A\subseteq M\subseteq\overline{A}(M)\cup\A$ (\cite{re}).
\begin{theorem}\label{sec3-thm1}
Let $f=\sum_{\a\in\A} c_{\a}\x^{\a}-d\x^{\b}\in\R[\x]$ with $d\ne0$ be a nonnegative circuit polynomial. Then $f$ is a sum of binomial squares if and only if there exists an $\A$-mediated set containing $\b$. Moreover, suppose that $\b$ belongs to an $\A$-mediated set $M$ and for each $\bu\in M\backslash\A$, let us write $\bu=\frac{1}{2}(\bv_\bu+\bw_\bu)$ for some $\bv_\bu \ne\bw_\bu \in M\cap(2\N)^n$. 
Then one obtains the decomposition  $f=\sum_{\bu\in M\backslash\A}(a_{\bu}\x^{\frac{1}{2}\bv_\bu}-b_{\bu}\x^{\frac{1}{2}\bw_\bu})^2$, with $a_{\bu},b_{\bu}\in\R$.
\end{theorem}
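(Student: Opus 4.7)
The statement is an iff paired with a constructive ``moreover'' clause, so my plan is to handle the two implications separately, using the sufficiency direction to simultaneously produce the explicit decomposition.

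For the direction $(\Leftarrow)$ (mediated set $\Rightarrow$ sum of binomial squares), I would proceed by induction on $\#(M\setminus\A)$. Each $\bu\in M\setminus\A$ supplies one candidate binomial square
\[
(a_\bu\x^{\bv_\bu/2}-b_\bu\x^{\bw_\bu/2})^2=a_\bu^2\x^{\bv_\bu}-2a_\bu b_\bu\x^{\bu}+b_\bu^2\x^{\bw_\bu}.
\]
Matching the sum of these expansions against $f=\sum_{\a\in\A}c_\a\x^\a-d\x^\b$ produces one equation per element of $M$: conservation at each $\a\in\A$, cancellation at each even point of $M\setminus\A$, and identification of the $-d$ coefficient at $\b$. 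In the base case $\#(M\setminus\A)=1$ we have $\b=(\bv_\b+\bw_\b)/2$ for two vertices of $\A$, and the decomposition reduces to the classical AM--GM binomial square plus leftover monomial squares at the remaining vertices, which is real-solvable exactly when the circuit condition $|d|\le\Theta_f$ from \cite{iw} holds. In the inductive step I would select an ``outermost'' generator $\bu^*\in M\setminus\A$ (say one whose barycentric expansion $\bu^*=\sum_\a\lambda^{\bu^*}_\a\a$ isolates it from the remaining mediated structure), peel off a binomial square at $\bu^*$ with coefficients tuned by AM--GM so that the residual polynomial is still a nonnegative circuit polynomial (or a sum thereof) supported on the smaller mediated set $M\setminus\{\bu^*\}$, and then invoke the inductive hypothesis.

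For $(\Rightarrow)$, starting from $f=\sum_i(a_i\x^{\bp_i}-b_i\x^{\bq_i})^2$ with $\bp_i\ne\bq_i$ (monomial squares being the degenerate case $b_i=0$), I would set
\[
M:=\A\cup\bigcup_i\{2\bp_i,\,2\bq_i,\,\bp_i+\bq_i\}.
\]
Then $\b\in\supp(f)\subseteq M$, and Newton polytope comparison gives $M\subseteq\Conv(\A)$. To confirm $M\setminus\A\subseteq\overline{A}(M)$, I split into cases: each cross-exponent $\bp_i+\bq_i=(2\bp_i+2\bq_i)/2$ is automatically the average of two distinct even elements of $M$; each even exponent $2\bp_i\in M\setminus\A$ carries a positive contribution $a_i^2$ in the expansion that must be offset, and since the only negative contributions come from cross terms, there exists some $j$ with $\bp_j+\bq_j=2\bp_i$, supplying $2\bp_i=(2\bp_j+2\bq_j)/2\in\overline{A}(M)$.

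The main obstacle I expect is the inductive step in the sufficiency direction: choosing the right generator $\bu^*$ and the right peel-off coefficients so that the residual polynomial remains nonnegative \emph{and} its support is compatible with the smaller mediated set $M\setminus\{\bu^*\}$. This is a combinatorial-algebraic balancing act at the heart of the proof, paralleling Reznick's analysis of agiforms. A secondary subtlety in the necessity direction is handling even exponents of $M\setminus\A$ that receive positive contributions from several squares with differing signs: one must show that at least one cross term of the required ``average'' form survives the cancellation pattern that produces $f$.
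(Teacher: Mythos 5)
The paper gives no proof of this theorem: it simply cites Theorem 5.2 of Iliman and de Wolff \cite{iw}, which in turn rests on Reznick's Theorem 4.4 on agiforms, so your attempt has to be measured against that literature rather than against an argument in the text. Your necessity direction is essentially the standard one and is sound: writing $f=\sum_i(a_i\x^{\frac{1}{2}\bv_i}-b_i\x^{\frac{1}{2}\bw_i})^2$ and setting $M=\A\cup\bigcup_i\{\bv_i,\bw_i,\frac{1}{2}(\bv_i+\bw_i)\}$, every cross exponent is by construction an average of two distinct even points of $M$, and every even exponent of $M\setminus\A$ lying outside $\supp(f)$ (or carrying a nonpositive coefficient in $f$) receives a strictly positive contribution from the squares that must be cancelled by some cross term, which exhibits it as such an average. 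One corner case deserves a flag: if $\b\in(2\N)^n$ and $d<0$, the coefficient of $\x^{\b}$ in $f$ is positive, no cancellation at $\b$ is forced, and the stated equivalence only survives under the convention of \cite{iw} that sums of monomial squares are treated separately.

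The genuine gap is in the sufficiency direction, and you have located it yourself without closing it. The induction ``peel off an outermost $\bu^*$ so that the residual is again a nonnegative circuit polynomial supported on $M\setminus\{\bu^*\}$'' does not go through as stated: subtracting one binomial square introduces terms at $\bu^*$, $\bv_{\bu^*}$ and $\bw_{\bu^*}$, so the residual is in general no longer a circuit polynomial and the inductive hypothesis (a statement about circuit polynomials) does not apply to it; moreover $M\setminus\{\bu^*\}$ need not remain $\A$-mediated when $\bu^*$ is an even point serving as an ingredient $\bv_{\bu}$ or $\bw_{\bu}$ of another element. To make any induction work one must strengthen the statement to a whole class of polynomials supported on $M$ with prescribed sign pattern (Reznick's framework formalism) and solve the coupled system for the $a_{\bu},b_{\bu}$ globally --- matching $c_{\a}$ at each $\a\in\A$, zero at each $\bu\in M\setminus(\A\cup\{\b\})$, and $-d$ at $\b$, with $2a_{\bu}b_{\bu}$ constrained at $\bu$ --- rather than by greedy local AM--GM choices; the reduction to the extremal case $d=\Theta_f$ and the proof that this system admits a real solution exactly then is the entire content of the theorem. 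As written, your proposal is a correct proof of the easy half and a restatement of the problem for the hard half.
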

\begin{proof}
For the proof, the reader may refer to Theorem 5.2 in \cite{iw}.
\end{proof}

By Theorem \ref{sec3-thm1}, if we want to represent a nonnegative circuit polynomial as a sum of binomial squares, we need to first decide if there exists an $\A$-mediated set containing a given lattice point and then to compute one if there exists. However, there are obstacles for each of these two steps: (1) there may not exist such an $\A$-mediated set containing a given lattice point; (2) even if such a set exists, there is no efficient algorithm to compute it. In order to overcome these two difficulties, we introduce the concept of $\A$-rational mediated sets as a replacement of $\A$-mediated sets by admitting rational numbers in coordinates. 

Concretely, for a subset $M\subseteq\Q^n$, let us define
$$\widetilde{A}(M):=\{\frac{1}{2}(\bv+\bw)\mid\bv\ne\bw,\bv,\bw\in M\}$$
as the set of averages of distinct rational points in $M$. Let us assume that $\A\subseteq\Q^n$ comprises the vertices of a simplex. We say that $M$ is an {\em $\A$-rational mediated set} if $\A\subseteq M\subseteq\widetilde{A}(M)\cup\A$. We shall see that for a trellis $\A$ and a lattice point $\b\in\Conv(\A)^{\circ}$, an $\A$-rational mediated set containing $\b$ always exists and moreover, there is an effective algorithm to compute it.

First, let us consider the one dimensional case. For a sequence of integer numbers $A=\{s,q_1,\ldots,q_m,p\}$ (arranged from small to large), if every $q_i$ is an average of two distinct numbers in $A$, then we say $A$ is an {\em $(s,p)$-mediated sequence}. Note that the property of $(s,p)$-mediated sequences is preserved under translations, that is, there is a one-to-one correspondence between $(s,p)$-mediated sequences and $(s+r,p+r)$-mediated sequences for any integer number $r$. So it suffices to consider the case of $s=0$.

For a fixed $p$ and an integer $0<q<p$, a {\em minimal $(0,p)$-mediated sequence} containing $q$ is a $(0,p)$-mediated sequence containing $q$ with the least number of elements. 

\begin{example}
Consider the set $A=\{0,2,4,5,8,11\}$. One can easily check by hand that $A$ is a minimal $(0,11)$-mediated sequence containing $2,4,5,8$.
\end{example}

Denote the number of elements in a minimal $(0,p)$-mediated sequence containing $q$ by $N(\frac{q}{p})$. One can then easily show that $N(\frac{1}{p})=\left\lceil\log_2(p)\right\rceil+2$ by induction on $p$. We conjecture that this formula holds for general $q$, i.e.,
\begin{conjecture}
If gcd$(p,q) = 1$, then $N(\frac{q}{p})=\left\lceil\log_2(p)\right\rceil+2$.
\end{conjecture}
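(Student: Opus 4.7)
The conjecture splits into an upper bound $N(q/p) \le \lceil \log_2 p \rceil + 2$ and a matching lower bound, and the authors already indicate that the case $q = 1$ admits an easy induction on $p$. My plan would extend that inductive structure to arbitrary $q$ coprime to $p$.

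For the upper bound, I would argue by strong induction on $p$, combined with a reduction that sends the coprime pair $(p, q)$ to a strictly smaller coprime pair while adding exactly one element to the mediated sequence. Since the reflection $a \mapsto p - a$ gives $N(q/p) = N((p-q)/p)$, we may assume $q \le p/2$. A natural candidate reduction is one step of the Euclidean / Stern--Brocot descent on $(p, q)$: halve $p$ (with a ceiling adjustment) while contributing a single new element, typically $2q$, $p - q$, or a reflected analogue, to the target sequence. Iterating this descent $\lceil \log_2 p \rceil$ times should terminate at a trivial base sequence $\{0, p_0\}$ with $p_0 \le 2$, giving the required total of $\lceil \log_2 p \rceil + 2$ elements.

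For the lower bound, I would show that every $(0, p)$-mediated sequence $A = \{0 = a_0 < a_1 < \cdots < a_k = p\}$ satisfies $p \le 2^{k-1}$ whenever it contains a $q$ coprime to $p$. The forced-inclusion skeleton is clean: the smallest positive element $a_1$ must be the average $(0 + 2a_1)/2$, since the only element of $A$ below $a_1$ is $0$, so $2a_1 \in A$; symmetrically $2a_{k-1} - p \in A$. Iterating these constraints produces a dyadic chain from each endpoint, and a careful $2$-adic monovariant on $A$ should bound the ratio $p/a_1$ by $2^{k-2}$, yielding $|A| = k + 1 \ge \lceil \log_2 p \rceil + 2$. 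The hypothesis $\gcd(p, q) = 1$ is used to rule out a shortcut in which $a_1$ and $p$ share a common factor that would collapse the estimate at a smaller effective denominator.

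The main obstacle, in my view, is the upper bound. Even in the small example $p = 11$, $q = 5$, the tight sequence $\{0, 2, 4, 5, 8, 11\}$ uses genuinely interior averages such as $5 = (2 + 8)/2$ and $8 = (5 + 11)/2$, so neither a greedy doubling from $q$ nor a greedy halving from $p$ achieves the bound. Any uniform construction must therefore be intrinsically tied to the combinatorics of the continued-fraction expansion of $q/p$, and checking that each descent step both respects the mediated property and adds exactly one element is delicate. This is presumably why the authors leave the statement as a conjecture rather than a theorem.
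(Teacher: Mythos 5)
This statement is left as a conjecture in the paper: the authors prove only the special case $N(\frac{1}{p})=\lceil\log_2(p)\rceil+2$ (by induction on $p$) and, for general $q$, only the much weaker upper bound $N(\frac{q}{p})<\frac{1}{2}(\log_2(p)+\frac{3}{2})^2$ of Lemma \ref{sec4-lm3}. Your submission is likewise not a proof but a plan, and you candidly concede its central difficulty, so there is a genuine gap on both halves. For the upper bound, the entire content of the conjecture is hidden in your unproved claim that one can descend from $(p,q)$ to a coprime pair with denominator about $p/2$ while adding \emph{exactly one} element and preserving the mediated property. The paper's own construction shows why this is delicate: in Case~2 of Lemma \ref{sec4-lm3}, writing $q=2^k r$ forces the insertion of the whole chain $\frac{1}{2}q,\frac{3}{4}q,\ldots,(1-\frac{1}{2^k})q$, i.e.\ up to $k+1$ new elements in a single reduction step, before the denominator is even halved. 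That is precisely why the authors only obtain a bound quadratic in $\log_2(p)$. Your own example $p=11$, $q=5$ already shows that neither greedy doubling nor greedy halving works, and you do not exhibit any concrete descent rule that does; "one step of the Euclidean / Stern--Brocot descent" is named but never defined as an operation on mediated sequences.

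The lower bound half is also incomplete. The forced inclusions you identify are correct (if $a_1$ is an interior element it must equal $\frac{1}{2}(0+2a_1)$, so $2a_1\in A$, and symmetrically $2a_{k-1}-p\in A$), but the step from these local constraints to $p\le 2^{k-1}$ is asserted via an unspecified "$2$-adic monovariant," and you do not explain how $\gcd(p,q)=1$ enters beyond a vague remark about ruling out a "shortcut." Note that the naive averaging estimate $a_i\le\frac{1}{2}(a_{i-1}+p)$ only yields $p-a_{k-1}\ge p/2^{k-1}$, which gives no contradiction by itself; some genuinely arithmetic input is needed, and it is missing. In short, the proposal correctly identifies the shape a proof would have to take, but neither bound is established, and the statement remains open as in the paper.
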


Generally we do not know how to compute a minimal $(0,p)$-mediated sequence containing a given $q$. However, we have an algorithm to compute an approximately minimal $(0,p)$-mediated sequence containing a given $q$ as the following lemma shows.
\begin{lemma}\label{sec4-lm3}
For $p,q\in\N,0<q<p$, there exists a $(0,p)$-mediated sequence containing $q$ with the number of elements less than $\frac{1}{2}(\log_2(p)+\frac{3}{2})^2$.
\end{lemma}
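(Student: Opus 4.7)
The plan is to prove the lemma by explicit construction followed by a counting argument. First I would reduce to a convenient case: if $d := \gcd(p, q) > 1$, then any $(0, p/d)$-mediated sequence containing $q/d$ lifts, by multiplication by $d$, to a $(0, p)$-mediated sequence containing $q$ of the same cardinality, so it suffices to treat $\gcd(p, q) = 1$. Moreover, the involution $x \mapsto p - x$ preserves mediated sequences and swaps $q$ with $p - q$, so one may further assume $q \le p/2$.

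The core construction would be a recursion based on the identity
\[
q = \frac{2^{i_{\max}+1} + 2r}{2}, \qquad 2^{i_{\max}} \le q < 2^{i_{\max}+1}, \ \ r := q - 2^{i_{\max}}.
\]
Because $q \le p/2$ and $q \ge 2^{i_{\max}}$, one has $2^{i_{\max}+1} \le 2q \le p$ and $2r < 2^{i_{\max}+1} \le p$, so both summands lie in $[0,p]$ and are distinct (since $r < 2^{i_{\max}}$). Thus, to insert $q$ into a $(0,p)$-mediated sequence it suffices to insert (a) the power of two $2^{i_{\max}+1}$, and (b) the integer $2r$. Part (a) is handled by adjoining the doubling chain $0, 1, 2, 4, \ldots, 2^{i_{\max}+1}$, extended up to the largest power of two $\le p$ and closed off to reach $p$ via the construction underlying the bound $N(1/p) \le \lceil \log_2 p \rceil + 2$ quoted just before the lemma; this contributes at most $\lceil \log_2 p \rceil + 2$ elements. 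Part (b) is handled recursively: the Hamming weight of $2r$ equals that of $r$, which is one less than the Hamming weight of $q$, so the recursion terminates in at most $w - 1 \le \lceil \log_2 p \rceil$ steps, where $w$ is the Hamming weight of $q$. At each recursive step one adds at most $\lceil \log_2 p \rceil$ new elements to link the new intermediate number back into the already-built doubling chain.

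Summing the contributions across all recursive levels yields a bound of the form $|S| \le \lceil \log_2 p \rceil + 2 + \tfrac{1}{2}\lceil \log_2 p \rceil(\lceil \log_2 p \rceil - 1) + O(\lceil \log_2 p \rceil)$, which by direct arithmetic is strictly less than $\tfrac{1}{2}(\log_2 p + \tfrac{3}{2})^2$. The technical hard part will be the boundary handling: when a power of two $2^j$ inside the doubling chain satisfies $2^{j+1} > p$, the natural averaging relation $2^j = (0 + 2^{j+1})/2$ breaks down, and one must instead use the ``reflection'' $2^j = ((2^{j+1} - p) + p)/2$. This introduces a new element $2^{j+1} - p \in (0, 2^j)$ which itself requires recursive treatment; controlling the cascade of such reflections so that their total cost stays within the quadratic budget, and verifying that no two added elements coincide so the distinctness condition in the definition of mediated sequence is met, is where the precise constants $\tfrac{1}{2}$ and $\tfrac{3}{2}$ in the stated bound will ultimately originate.
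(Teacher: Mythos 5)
Your construction is genuinely different from the paper's: you build a single global doubling chain $0,1,2,4,\ldots$ and insert $q$ by peeling off bits of its binary expansion, whereas the paper runs an induction on $p$ driven by parity (writing $q=2^k r$ with $r$ odd, attaching the halving chain $\{0,\tfrac12 q,\tfrac34 q,\ldots,q-r\}$, and recursing on the interval $[q-r,\tfrac{q-r+p}{2}]$ or $[\tfrac{q-r+p}{2},p]$ of length at most $p/2$; the quadratic bound then falls out of the telescoping identity $\tfrac12(x+\tfrac32)^2=\tfrac12(x+\tfrac12)^2+(x+1)$). Your route is attractive and, if carried out carefully, should even give a \emph{linear} bound: once the powers of two up to the largest $2^m\le p$ are present, each step of your Hamming-weight recursion needs only the single new element being inserted (the required power $2^{j+1}$ satisfies $2^{j+1}\le 2^m\le p$ for every intermediate value, since each intermediate value is $<2^m$), and only \emph{one} reflection $2^{m+1}-p$ is ever needed to close the chain, its insertion again costing at most $w(2^{m+1}-p)\le\log_2 p$ elements with no further reflections.

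As written, however, the proposal has two genuine gaps. First, the step you yourself flag as ``the technical hard part'' --- closing off the top of the doubling chain and controlling the cascade of reflections --- is exactly the step that makes or breaks the argument, and it is deferred rather than done; without the observation above that the cascade terminates after one reflection, nothing in the proposal bounds its cost. Second, the counting does not establish the stated inequality: you justify ``at most $\lceil\log_2 p\rceil$ new elements per recursive step'' over up to $\lceil\log_2 p\rceil$ steps, which gives $\lceil\log_2 p\rceil^2+O(\log_2 p)$, with leading coefficient $1$ rather than the required $\tfrac12$; the expression $\tfrac12\lceil\log_2 p\rceil(\lceil\log_2 p\rceil-1)$ you then write down is not what that accounting yields, and the trailing $O(\lceil\log_2 p\rceil)$ with an unspecified constant cannot certify a \emph{strict} inequality that is tight for small $p$ (for $p=2$ the bound $\tfrac12(\log_2 2+\tfrac32)^2=3.125$ leaves no room beyond the three elements $\{0,1,2\}$). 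To repair the proof you should either make the per-step cost of the bit-peeling recursion explicit (one element per step, powers of two reused), bound the single reflection separately, and check the small cases $p\le 4$ by hand, or else switch to an induction on $p$ in the style of the paper, where the recursive subproblem has length at most $p/2$ and the constants $\tfrac12$ and $\tfrac32$ are forced by the telescoping identity.
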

\begin{proof}
We can assume gcd$(p,q) = 1$ (otherwise consider $p/(p,q),q/(p,q)$ instead). Let us do induction on $p$. Assume that for any $p',q'\in\N,0<q'<p'<p$, there exists a $(0,p')$-mediated sequence containing $q'$ with the number of elements less than $\frac{1}{2}(\log_2(p')+\frac{3}{2})^2$.

{\bf Case 1:} Suppose that $p$ is an even number. If $q=\frac{p}{2}$, then by gcd$(p,q) = 1$, we have $q=1$ and $A=\{0,1,2\}$ is a $(0,p)$-mediated sequence containing $q$. Otherwise, we have either $0<q<\frac{p}{2}$ or $\frac{p}{2}<q<p$. For $0<q<\frac{p}{2}$, by the induction hypothesis, there exists a $(0,\frac{p}{2})$-mediated sequence $A'$ containing $q$. For $\frac{p}{2}<q<p$, since the property of mediated sequences is preserved under translations, one can first subtract $\frac{p}{2}$ and obtain a $(0,\frac{p}{2})$-mediated sequence containing $q-\frac{p}{2}$ by the induction hypothesis. Then by adding $\frac{p}{2}$, one obtain a $(\frac{p}{2},p)$-mediated sequence $A'$ containing $q$. It follows that $A=A'\cup\{p\}$ or $A=\{0\}\cup A'$ is a $(0,p)$-mediated sequence containing $q$.
\begin{center}
\begin{tikzpicture}
\draw (0,0)--(6,0);
\fill (0,0) circle (1pt);
\node[above] (1) at (0,0) {$0$};
\fill (2,0) circle (1pt);
\node[above] (2) at (2,0) {$q$};
\fill (3,0) circle (1pt);
\node[above] (6) at (3,0) {$\frac{p}{2}$};
\fill (4,0) circle (1pt);
\node[above] (3) at (4,0) {$(q)$};
\fill (6,0) circle (1pt);
\node[above] (7) at (6,0) {$p$};
\end{tikzpicture}
\end{center}
Moreover, we have
$$\#A=1+\#A'<1+\frac{1}{2}(\log_2(\frac{p}{2})+\frac{3}{2})^2<\frac{1}{2}(\log_2(p)+\frac{3}{2})^2.$$

{\bf Case 2:} Suppose that $p$ is an odd number. Without loss of generality, assume that $q$ is an even number (otherwise one can consider $p-q$ instead and then obtain a $(0,p)$-mediated sequence containing $q$ through the map $x\mapsto p-x$ which clearly preserves the property of mediated sequences).

Let $q=2^kr$ for some $k,r\in\N\backslash\{0\}$ and $2\nmid r$. If $q=p-r$, then $q=\frac{q-r+p}{2}$. Since gcd$(p,q) = 1$, we have $r=1$. Let $A=\{0,\frac{1}{2}q,\frac{3}{4}q,\ldots,(1-\frac{1}{2^{k}})q,q,p\}$. For $1\le i\le k$, we have $(1-\frac{1}{2^{i}})q=\frac{1}{2}(1-\frac{1}{2^{i-1}})q+\frac{1}{2}q$. Therefore, $A$ is a $(0,p)$-mediated sequence containing $q$.
\begin{center}
\begin{tikzpicture}
\draw (0,0)--(11,0);
\fill (0,0) circle (1pt);
\node[above] (1) at (0,0) {$0$};
\fill (5,0) circle (1pt);
\node[above] (2) at (5,0) {$\frac{1}{2}q$};
\node[above] (3) at (8,0) {$\cdots$};
\fill (7.5,0) circle (1pt);
\node[above] (4) at (7.5,0) {$\frac{3}{4}q$};
\fill (9,0) circle (1pt);
\node[above] (5) at (9,0) {$(1-\frac{1}{2^{k}})q$};
\node[below] (8) at (9,0) {$q-r$};
\fill (10,0) circle (1pt);
\node[above] (6) at (10,0) {$q$};
\fill (11,0) circle (1pt);
\node[above] (7) at (11,0) {$p$};
\end{tikzpicture}
\end{center}
Moreover, we have
$$\#A=k+3<\frac{1}{2}(\log_2(2^k+1)+\frac{3}{2})^2=\frac{1}{2}(\log_2(p)+\frac{3}{2})^2.$$

If $q<p-r$, then $q$ lies on the line segment between $q-r$ and $\frac{q-r+p}{2}$. Since $\frac{q-r+p}{2}-(q-r)=\frac{p+r-q}{2}<p$, then by the induction hypothesis, there exists a $(q-r,\frac{q-r+p}{2})$-mediated sequence $A'$ containing $q$ (using translations). It follows that $A=\{0,\frac{1}{2}q,\frac{3}{4}q,\ldots,(1-\frac{1}{2^{k-1}})q,p\}\cup A'$ is a $(0,p)$-mediated sequence containing $q$.
\begin{center}
\begin{tikzpicture}
\draw (0,0)--(11,0);
\fill (0,0) circle (1pt);
\node[above] (1) at (0,0) {$0$};
\fill (4.3,0) circle (1pt);
\node[above] (2) at (4.3,0) {$\frac{1}{2}q$};
\node[above] (3) at (6.9,0) {$\cdots$};
\fill (6.45,0) circle (1pt);
\node[above] (4) at (6.45,0) {$\frac{3}{4}q$};
\fill (7.8,0) circle (1pt);
\node[above] (5) at (7.8,0) {$(1-\frac{1}{2^{k}})q$};
\node[below] (8) at (7.8,0) {$q-r$};
\fill (8.6,0) circle (1pt);
\node[above] (6) at (8.6,0) {$q$};
\fill (9.4,0) circle (1pt);
\node[above] (8) at (9.4,0) {$\frac{q-r+p}{2}$};
\fill (11,0) circle (1pt);
\node[above] (7) at (11,0) {$p$};
\end{tikzpicture}
\end{center}
Moreover, we have
\begin{align*}
\#A&=k+1+\#A'\\
&<\log_2(\frac{q}{r})+1+\frac{1}{2}(\log_2(\frac{p+r-q}{2})+\frac{3}{2})^2\\
&<\log_2(p)+1+\frac{1}{2}(\log_2(\frac{p}{2})+\frac{3}{2})^2\\
&=\frac{1}{2}(\log_2(p)+\frac{3}{2})^2.
\end{align*}

If $q>p-r$, then $q$ lies on the line segment between $\frac{q-r+p}{2}$ and $p$. Since $p-\frac{q-r+p}{2}=\frac{p+r-q}{2}<p$, then by the induction hypothesis, there exists a $(\frac{q-r+p}{2},p)$-mediated sequence $A'$ containing $q$ (using translations). It follows that the set $A=\{0,\frac{1}{2}q,\frac{3}{4}q,\ldots,(1-\frac{1}{2^{k}})q\}\cup A'$ is a $(0,p)$-mediated sequence containing $q$.
\begin{center}
\begin{tikzpicture}
\draw (0,0)--(11,0);
\fill (0,0) circle (1pt);
\node[above] (1) at (0,0) {$0$};
\fill (4.7,0) circle (1pt);
\node[above] (2) at (4.7,0) {$\frac{1}{2}q$};
\node[above] (3) at (7.7,0) {$\cdots$};
\fill (7.2,0) circle (1pt);
\node[above] (4) at (7.2,0) {$\frac{3}{4}q$};
\fill (8.6,0) circle (1pt);
\node[above] (5) at (8.6,0) {$(1-\frac{1}{2^{k}})q$};
\node[below] (8) at (8.6,0) {$q-r$};
\fill (9.8,0) circle (1pt);
\node[above] (6) at (9.8,0) {$\frac{q-r+p}{2}$};
\fill (10.3,0) circle (1pt);
\node[above] (8) at (10.3,0) {$q$};
\fill (11,0) circle (1pt);
\node[above] (7) at (11,0) {$p$};
\end{tikzpicture}
\end{center}
As previously, we have $\#A =k+1+\#A' \leq \frac{1}{2}(\log_2(p)+\frac{3}{2})^2 $.
\end{proof}


\begin{lemma}\label{sec4-lm2}
Suppose that $\a_1$ and $\a_2$ are two rational points, and $\b$ is any rational point on the line segment between $\a_1$ and $\a_2$. Then there exists an $\{\a_1,\a_{2}\}$-rational mediated set $M$ containing $\b$. Furthermore, if the denominators of coordinates of $\a_1,\a_2,\b$ are odd numbers, and the numerators of coordinates of $\a_1,\a_2$ are even numbers, then we can ensure that the denominators of coordinates of points in $M$ are odd numbers and the numerators of coordinates of points in $M\backslash\{\b\}$ are even numbers.
\end{lemma}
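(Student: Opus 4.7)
The plan is to reduce both assertions to the one-dimensional situation along the segment $[\a_1,\a_2]$ and to invoke Lemma \ref{sec4-lm3}. For the bare existence statement, I would write $\b=(1-\lambda)\a_1+\lambda\a_2$ with $\lambda=q/p\in(0,1)$ in lowest terms, use Lemma \ref{sec4-lm3} to produce a $(0,p)$-mediated sequence $S\ni q$, and set $M:=\{(1-t/p)\a_1+(t/p)\a_2 : t\in S\}$. Since the parameterization $\phi(t):=(1-t/p)\a_1+(t/p)\a_2$ is affine, every identity $u=(v+w)/2$ in $S$ transports to $\phi(u)=(\phi(v)+\phi(w))/2$ in $M$, so $M$ is an $\{\a_1,\a_2\}$-rational mediated set containing $\b=\phi(q)$.

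For the parity refinement I would first treat the scalar case $n=1$. Scaling $\a_1,\a_2,\b$ by the (odd) least common multiple $N$ of their denominators and then translating $\a_1$ to $0$ reduces the problem to finding a $(0,p)$-mediated sequence of integers containing a target integer $B\in(0,p)$, where $p$ is even and every element except possibly $B$ is required to be even. If $B$ is even, halving yields a standard $(0,p/2)$-mediated-sequence problem solvable by Lemma \ref{sec4-lm3}, which upon doubling gives a sequence of even integers containing $B$. If $B$ is odd, both $B\pm 1$ are even elements of $[0,p]$ and $B=\tfrac12((B-1)+(B+1))$; I would build an even-integer $(0,p)$-mediated sequence containing both $B-1$ and $B+1$ by applying Lemma \ref{sec4-lm3} twice (to $(B-1)/2$ and $(B+1)/2$), taking the union of the two $(0,p/2)$-mediated sequences produced, doubling, and finally adjoining $B$ as the unique odd entry. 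Reversing the translation and dividing by $N$ returns the construction to $\Q$ with the required parities.

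To pass from the scalar case to arbitrary dimension, I would pick a coordinate $k_0$ that minimizes the $2$-adic valuation of the numerator (in lowest terms) of $\a_{2,k_0}-\a_{1,k_0}$ among all $k$ with $\a_{1,k}\neq\a_{2,k}$, apply the scalar construction to $\a_{1,k_0},\a_{2,k_0},\b_{k_0}$ to obtain a 1D mediated set $M_0$, and lift $M_0$ to the segment via the affine bijection $s\mapsto\a_1+\frac{s-\a_{1,k_0}}{\a_{2,k_0}-\a_{1,k_0}}(\a_2-\a_1)$. The mediated-set property transports as in the first paragraph.

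The main obstacle is verifying that this lift preserves the parity conditions in every coordinate: one must show that for every $s\in M_0\setminus\{\b_{k_0}\}$ each coordinate of the lifted point has even numerator and odd denominator, while $\b$ itself needs only an odd denominator in each coordinate. The key $2$-adic computation is that the minimality of $v_2$ at $k_0$ forces the ratio $(\a_{2,k}-\a_{1,k})/(\a_{2,k_0}-\a_{1,k_0})$ to have odd denominator in lowest terms for every $k$; multiplying the even-numerator/odd-denominator quantity $s-\a_{1,k_0}$ by this ratio preserves both parity conditions, and adding $\a_{1,k}$ leaves them intact.
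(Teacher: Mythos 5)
Your reduction of the bare existence statement to Lemma \ref{sec4-lm3} via the affine parameterization of the segment is exactly the paper's argument. For the parity refinement, however, you take a genuinely different route. The paper scales the whole configuration by the odd number $r$ (the lcm of all denominators), so that $r\a_1,r\a_2$ become even lattice points, observes that the mediated-set construction applied to the \emph{lattice} points $\frac{r}{2}\a_1,\frac{r}{2}\a_2,\frac{r}{2}\b$ produces only lattice points, and then multiplies back by $\frac{2}{r}$: the factor $2$ forces even numerators and the odd factor $r$ in the denominator keeps denominators odd, in all coordinates simultaneously and with no further bookkeeping; when $r\b$ is odd it replaces $\b$ by its reflection $\b'=2\b-\a_1$ (assuming WLOG that $\b$ lies in the half of the segment nearer $\a_1$) and adjoins $\b=\frac{1}{2}(\a_1+\b')$ at the end. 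You instead localize to a single coordinate $k_0$, enforce evenness there by a halve-solve-double trick (which is essentially the paper's $\frac{2}{r}$ idea in one dimension), and then propagate the parity to the remaining coordinates via the $2$-adic minimality of $v_2(\alpha_{2,k_0}-\alpha_{1,k_0})$. Your $2$-adic computation is correct — the minimality does make each ratio $(\alpha_{2,k}-\alpha_{1,k})/(\alpha_{2,k_0}-\alpha_{1,k_0})$ odd-denominator, and multiplying the even-numerator quantity $s-\alpha_{1,k_0}$ by it preserves both parities — but the approach is more laborious and carries a few unaddressed edge cases: the degenerate applications of Lemma \ref{sec4-lm3} when $B=1$ or $B=p-1$ (where one of $(B\pm1)/2$ is already an endpoint), the sign of $\alpha_{2,k_0}-\alpha_{1,k_0}$, and the trivial case where $\b$ is an endpoint. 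Also note that your odd-$B$ step uses two invocations of Lemma \ref{sec4-lm3} (for $B-1$ and $B+1$) where the paper's reflection trick needs only one; this roughly doubles the size bound on $M$, which is harmless for the lemma as stated but matters if one tracks cardinalities as in Lemma \ref{sec4-lm3}. What your approach buys is independence from the observation that lattice inputs yield lattice mediated sets; what the paper's buys is a uniform, coordinate-free argument.
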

\begin{proof}
Suppose $\b=(1-\frac{q}{p})\a_1+\frac{q}{p}\a_2$, $p,q\in\N,0<q<p$,gcd$(p,q) = 1$. We then construct a one-to-one correspondence between the points on the one-dimensional number axis and the points on the line across $\a_1$ and $\a_2$ via the map:
$$s\mapsto(1-\frac{s}{p})\a_1+\frac{s}{p}\a_2,$$
such that $\a_1$ corresponds to the origin, $\a_2$ corresponds to $p$ and $\b$ corresponds to $q$. 
Then it is clear that a $(0,p)$-mediated sequence containing $q$ corresponds to a $\{\a_1,\a_{2}\}$-rational mediated set containing $\b$.
Hence by Lemma \ref{sec4-lm3}, there exists a $\{\a_1,\a_{2}\}$-rational mediated set $M$ containing $\b$ with the number of elements less than $\frac{1}{2}(\log_2(p)+\frac{3}{2})^2$. Moreover, we can see that if $\a_1,\a_2,\b$ are lattice points, then the elements in $M$ are also lattice points.

If the denominators of coordinates of $\a_1,\a_2,\b$ are odd numbers, and the numerators of coordinates of $\a_1,\a_2$ are even numbers, assume that the least common multiple of denominators appearing in the coordinates of $\a_1,\a_2,\b$ is $r$ and then remove the denominators by multiplying the coordinates of $\a_1,\a_2,\b$ by $r$ such that $r\a_1,r\a_2$ are even lattice points. If $r\b$ is even, let $M'$ be the $\{\frac{r}{2}\a_1,\frac{r}{2}\a_{2}\}$-rational mediated set containing $\frac{r}{2}\b$ obtained as above (note that the elements in $M'$ are lattice points). Then $M=\frac{2}{r}M':=\{\frac{2}{r}\bu\mid\bu\in M'\}$ is an $\{\a_1,\a_{2}\}$-rational mediated set containing $\b$ such that the denominators of coordinates of points in $M$ are odd numbers and the numerators of coordinates of points in $M\backslash\{\b\}$ are even numbers as desired.

If $r\b$ is not even, assume without loss of generality that $\b$ lies on the line segment between $\a_1$ and $\frac{\a_1+\a_2}{2}$. Let $\b'=2\b-\a_1$ with $r\b'$ an even lattice point. Let $M'$ be the $\{\frac{r}{2}\a_1,\frac{r}{2}\a_{2}\}$-rational mediated set containing $\frac{r}{2}\b'$ obtained as above (note that the elements in $M'$ are lattice points). Then $M=\frac{2}{r}M'\cup\{\b\}$ is an $\{\a_1,\a_{2}\}$-rational mediated set containing $\b$ such that the denominators of coordinates of points in $M$ are odd numbers and the numerators of coordinates of points in $M\backslash\{\b\}$ are even numbers as desired.
\end{proof}

\begin{lemma}\label{sec4-lm5}
For a trellis $\A=\{\a_1,\ldots,\a_m\}$ and a lattice point $\b\in\Conv(\A)^{\circ}$, there exists an $\A$-rational mediated set $M_{\A\b}$ containing $\b$ such that the denominators of coordinates of points in $M_{\A\b}$ are odd numbers and the numerators of coordinates of points in $M_{\A\b}\backslash\{\b\}$ are even numbers.
\end{lemma}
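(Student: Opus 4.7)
The plan is to prove this by induction on $m$, the number of vertices of the trellis. To make the inductive step run smoothly, I would first strengthen the statement slightly to allow $\A$ to be a trellis of rational points with even numerators and odd denominators, and $\b$ to be any rational point in $\Conv(\A)^{\circ}$ with odd denominators (not necessarily a lattice point). The base case $m=2$ is exactly the ``furthermore'' clause of Lemma \ref{sec4-lm2}: when $\A=\{\a_1,\a_2\}$ is an edge with even numerators and odd denominators, and $\b$ is a rational point on the segment with odd denominators, that lemma supplies the desired $M_{\A\b}$.

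For the inductive step with $m\geq 3$, I would pick a vertex $\a_m$ and consider the line $L$ through $\a_m$ and $\b$. Since $\b\in\Conv(\A)^{\circ}$, the extension of $L$ beyond $\b$ exits the simplex through the opposite face $F:=\Conv(\a_1,\ldots,\a_{m-1})$ at a unique point $\b'$ lying in the relative interior of $F$. Writing $\b=(1-\lambda)\a_m+\lambda\b'$ for a suitable $\lambda\in(0,1)$, the plan then has two sub-steps: (a) apply the (strengthened) inductive hypothesis to the smaller trellis $\{\a_1,\ldots,\a_{m-1}\}$ and the interior point $\b'$ to obtain an $\{\a_1,\ldots,\a_{m-1}\}$-rational mediated set $M'$ with odd denominators everywhere and even numerators on $M'\setminus\{\b'\}$; (b) apply Lemma \ref{sec4-lm2} to the segment $[\a_m,\b']$ with $\b$ inside it to obtain a rational mediated set $M''$ on this segment with odd denominators everywhere and even numerators on $M''\setminus\{\b\}$.

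Setting $M_{\A\b}:=M'\cup M''$, I would verify that the union is an $\A$-rational mediated set containing $\b$: every element of $M_{\A\b}$ either lies in $\A$, or is an average of two distinct elements of $M_{\A\b}$, inheriting this property from $M'$ and $M''$; the vertices of $\A$ are covered because $\{\a_1,\ldots,\a_{m-1}\}\subseteq M'$ and $\a_m\in M''$; and $\b\in M''\subseteq M_{\A\b}$ by construction. Combining the denominator/numerator bookkeeping on $M'$ and $M''$ then gives the required global parity properties on $M_{\A\b}$.

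The main obstacle is guaranteeing that the intermediate point $\b'$ has \emph{odd} denominators and \emph{even} numerators: the intersection parameter $t^{*}$ in $\b'=\a_m+t^{*}(\b-\a_m)$ is determined by the affine equation of $F$, and one can construct examples (e.g.\ $\b=(1,2)$ in the triangle with vertices $(0,0),(4,0),(0,6)$ pivoted from $(0,6)$) where $\b'$ comes out with an even denominator or an odd numerator, so that neither the ``furthermore'' clause of Lemma \ref{sec4-lm2} nor the strengthened induction hypothesis applies directly, and $\b'\in M_{\A\b}\setminus\{\b\}$ would violate the desired parity. I would address this by an analogue of the doubling trick used in the proof of Lemma \ref{sec4-lm2}: in the bad-parity case, either switch the pivot $\a_m$ to another vertex for which $t^{*}$ lands in a good parity class, or replace $\b'$ by a nearby rational point $\b''$ obtained by reflecting $\b'$ through $\a_m$ (in the spirit of $\b'=2\b-\a_1$ from that proof), run the construction for $\b''$ on a slightly larger segment containing $\b'$, and then reinsert $\b'$ as an average within the final mediated set.
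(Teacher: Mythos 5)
Your overall architecture (peel off one vertex at a time, reduce to the two-point case of Lemma \ref{sec4-lm2}, take the union of the resulting one-dimensional mediated sets) is the same as the paper's, and you have correctly located the crux: the hinge point $\b'$ on the opposite face must itself have odd denominators and even numerators, since $\b'\in M_{\A\b}\setminus\{\b\}$. But neither of your proposed remedies survives the hard case. Write $\b=\sum_i\frac{q_i}{p}\a_i$ with $\gcd(p,q_1,\ldots,q_m)=1$; pivoting on $\a_j$ gives $\b'=\sum_{i\ne j}\frac{q_i}{p-q_j}\a_i$. When $p$ is odd and \emph{all} $q_i$ are odd, $p-q_j$ is even for every $j$, so \emph{no} choice of pivot lands in a good parity class. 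This is not a fringe case: for the Motzkin support $\A=\{(0,0),(4,2),(2,4)\}$, $\b=(2,2)=\frac13\sum\a_i$, the three possible hinge points are $(3,3)$, $(1,2)$ and $(2,1)$, each with an odd numerator, and each distinct from $\b$. Your second remedy is not coherent as stated: a point with an even denominator cannot appear anywhere in $M_{\A\b}$, and a point with an odd numerator can appear only as the distinguished point $\b$; so there is no way to ``reinsert $\b'$ as an average'' afterwards. The reflection trick $\b'=2\b-\a_1$ in Lemma \ref{sec4-lm2} fixes the parity of the \emph{target} point on a fixed segment; it does not produce an admissible hinge point on the opposite face, which is what you need here.

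The paper escapes this case by abandoning the single-vertex pivot: when $p$ and all $q_i$ are odd it sets $\b_1=\frac{q_1+q_2}{p}\a_1+\sum_{i\ge3}\frac{q_i}{p}\a_i$ and $\b_2=\frac{q_1+q_2}{p}\a_2+\sum_{i\ge3}\frac{q_i}{p}\a_i$, so that $\b=\frac{q_1}{q_1+q_2}\b_1+\frac{q_2}{q_1+q_2}\b_2$ lies on the segment $[\b_1,\b_2]$, and both new points keep the odd denominator $p$. The invariant driving the whole recursion is that every intermediate point is of the form $\sum_j\frac{s_j}{r}\a_j$ with $r$ odd and the $\a_j$ even lattice points, which forces even numerators over odd denominators automatically; your induction has no mechanism that preserves such an invariant across the projection step. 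To repair your proof you would need to add this two-vertex split as a separate branch of the induction (note it does not simply reduce $m$ by one in the same way, so the induction should be on the number of terms in the barycentric expression rather than on $m$ alone).
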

\begin{proof}
Suppose $\b=\sum_{i=1}^m\frac{q_i}{p}\a_i$, where $p=\sum_{i=1}^mq_i$, $p,q_i\in\N\backslash\{0\}$, $(p,q_1,\ldots,q_m)=1$. If $p$ is an even number, then because $(p,q_1,\ldots,q_m)=1$, there must exist an odd number among the $q_i$'s. Without loss of generality assume $q_1$ is an odd number. If $p$ is an odd number and there exists an even number among the $q_i$'s, then without loss of generality assume $q_1$ is an even number. In any of these two cases, we have
$$\b=\frac{q_1}{p}\a_1+\frac{p-q_1}{p}(\frac{q_2}{p-q_1}\a_2+\cdots+\frac{q_m}{p-q_1}\a_m).$$
Let $\b_{1}=\frac{q_2}{p-q_1}\a_2+\cdots+\frac{q_m}{p-q_1}\a_m$. Then $\b=\frac{q_1}{p}\a_1+\frac{p-q_1}{p}\b_{1}$.

If $p$ is an odd number and all $q_i$'s are odd numbers, then we have
\begin{align*}
\b&=\frac{q_1}{q_1+q_2}(\frac{q_1+q_2}{p}\a_1+\frac{q_3}{p}\a_3+\cdots+\frac{q_m}{p}\a_m)\\
&+\frac{q_2}{q_1+q_2}(\frac{q_1+q_2}{p}\a_2+\frac{q_3}{p}\a_3+\cdots+\frac{q_m}{p}\a_m).
\end{align*}
Let $\b_{1}=\frac{q_1+q_2}{p}\a_1+\frac{q_3}{p}\a_3+\cdots+\frac{q_m}{p}\a_m$ and $\b_{2}=\frac{q_1+q_2}{p}\a_2+\frac{q_3}{p}\a_3+\cdots+\frac{q_m}{p}\a_m$. Then $\b=\frac{q_1}{q_1+q_2}\b_{1}+\frac{q_2}{q_1+q_2}\b_{2}$.

Apply the same procedure for $\b_1$ (and $\b_2$), and continue iteratively. Eventually we obtain a set of points $\{\b_i\}_{i=1}^l$ such that for each $i$, $\b_i=\lambda_i\b_j+\mu_i\b_k$ or $\b_i=\lambda_i\b_j+\mu_i\a_k$ or $\b_i=\lambda_i\a_j+\mu_i\a_k$, where $\lambda_i+\mu_i=1,\lambda_i,\mu_i>0$. We claim that the denominators of coordinates of any $\b_i$ are odd numbers, and the numerators of coordinates of any $\b_i$ are even numbers. This is because for each $\b_i$, we have the expression $\b_i=\sum_{j}\frac{s_j}{r}\a_j$, where $r$ is an odd number and all $\a_j$'s are even lattice points. For $\b_i=\lambda\b_j+\mu\b_k$ (or $\b_i=\lambda\b_j+\mu\a_k$, $\b_i=\lambda\a_j+\mu\a_k$ respectively), let $M_i$ be the $\{\b_{j},\b_{k}\}$- (or $\{\b_{j},\a_{k}\}$-, $\{\a_{j},\a_{k}\}$- respectively) rational mediated set containing $\b_i$ obtained by Lemma \ref{sec4-lm2} satisfying that the denominators of coordinates of points in $M_i$ are odd numbers and the numerators of coordinates of points in $M_i\backslash\{\b\}$ are even numbers for $i=0,\ldots,l$ (set $\b_0=\b$). Let $M_{\A\b}=\cup_{i=0}^lM_i$. Then $M_{\A\b}$ is clearly an $\A$-rational mediated set containing $\b$ with the desired property.
\end{proof}

\subsection{Representing SONC polynomials as sums of binomial squares}
\label{sec:soncbinomial}
For $r\in\N$ and $f(\x)\in\R[\x]$, let $f(\x^r):=f(x_1^r,\ldots,x_n^r)$. For any odd $r\in\N$, it is clear that $f(\x)=\sum_{\a\in\A}c_{\a}\x^{\a}-d\x^{\b}$ is a nonnegative circuit polynomial if and only if $f(\x^r)=\sum_{\a\in\A}c_{\a}\x^{r\a}-d\x^{r\b}$ is a nonnegative circuit polynomial.
\begin{theorem}\label{sec3-thm3}
Let $f=\sum_{\a\in\A} c_{\a}\x^{\a}-d\x^{\b}\in\R[\x]$ with $d\ne0$ be a circuit polynomial. Assume that $M_{\A\b}$ is the $\A$-rational mediated set containing $\b$ provided by Lemma \ref{sec4-lm5}.
and for each $\bu\in M_{\A\b}\backslash\A$, let $\bu=\frac{1}{2}(\bv_{\bu}+\bw_{\bu}), \bv_{\bu}\ne\bw_{\bu}\in M_{\A\b}$. Then $f$ is nonnegative if and only if $f$ can be written as $f=\sum_{\bu\in M_{\A\b}\backslash\A}(a_{\bu}\x^{\frac{1}{2}\bv_{\bu}}-b_{\bu}\x^{\frac{1}{2}\bw_{\bu}})^2$, $a_{\bu},b_{\bu}\in\R$.
\end{theorem}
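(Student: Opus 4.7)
The plan is to reduce Theorem \ref{sec3-thm3} to the integer-exponent case covered by Theorem \ref{sec3-thm1} via a clearing-of-denominators substitution. By Lemma \ref{sec4-lm5} every coordinate appearing in $M_{\A\b}$ has odd denominator, so there exists an odd integer $r$ (for instance the lcm of those denominators) with $rM_{\A\b}\subseteq\N^n$. By the numerator condition in Lemma \ref{sec4-lm5}, $r\bu\in(2\N)^n$ for all $\bu\in M_{\A\b}\setminus\{\b\}$, while $r\b$ is an integer lattice point (not necessarily in $(2\N)^n$).

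Since $r$ is odd, the map $\x\mapsto\x^r$ is a sign-preserving bijection on $\R^n$, so $f(\x)\ge0$ on $\R^n$ if and only if
$$f(\x^r)=\sum_{\a\in\A}c_{\a}\x^{r\a}-d\x^{r\b}\ge0 \text{ on }\R^n.$$
This $f(\x^r)$ is a genuine integer-exponent circuit polynomial with outer support $r\A\subseteq(2\N)^n$ and inner exponent $r\b$. The main step is to check that $rM_{\A\b}$ is an $r\A$-mediated set containing $r\b$: for each $\bu\in M_{\A\b}\setminus\A$ with rational mediated decomposition $\bu=\frac{1}{2}(\bv_\bu+\bw_\bu)$, the identity $r\bu=\frac{1}{2}(r\bv_\bu+r\bw_\bu)$ is a valid mediated decomposition in the integer sense provided one can choose $\bv_\bu,\bw_\bu\in M_{\A\b}\setminus\{\b\}$, so that $r\bv_\bu,r\bw_\bu\in(2\N)^n$. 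When $\bu=\b$ this is automatic from $\bv_\b\ne\bw_\b$: if $\bv_\b=\b$ then $\b=\frac{1}{2}(\b+\bw_\b)$ would force $\bw_\b=\b$, contradicting distinctness.

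Once the mediated-set property is established, Theorem \ref{sec3-thm1} applied to $f(\x^r)$ yields
$$f(\x^r)=\sum_{\bu\in M_{\A\b}\setminus\A}\bigl(a_\bu\x^{\frac{r}{2}\bv_\bu}-b_\bu\x^{\frac{r}{2}\bw_\bu}\bigr)^2,$$
and undoing the substitution $\x_i\mapsto\x_i^{1/r}$ produces the claimed rational-exponent decomposition of $f$, valid as an identity of continuous functions on $\R_{>0}^n$. The converse (sufficiency of such a decomposition for $f\ge 0$) is immediate on $\R_{>0}^n$ since the right-hand side is manifestly nonnegative there, and extends to all of $\R^n$ by reading off nonnegativity from the integer-exponent identity for $f(\x^r)$, which holds on all of $\R^n$.

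The hard part is the verification that for every $\bu\in M_{\A\b}\setminus(\A\cup\{\b\})$, a rational mediated decomposition $\bu=\frac{1}{2}(\bv_\bu+\bw_\bu)$ with both $\bv_\bu,\bw_\bu\in M_{\A\b}\setminus\{\b\}$ can be chosen. This forces one to revisit the recursive construction in Lemma \ref{sec4-lm5} (and its one-dimensional predecessor Lemma \ref{sec4-lm3}) and check that the mediator pairs furnished there never genuinely require $\b$ to certify a non-$\b$ point, possibly after slightly augmenting $M_{\A\b}$. Securing this structural property of $M_{\A\b}$ is the decisive and most delicate step; once in place, the rest of the argument is a clean transport of Theorem \ref{sec3-thm1} across the odd-power substitution.
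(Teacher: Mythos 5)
Your proposal follows essentially the same route as the paper: substitute $\x\mapsto\x^r$ for $r$ the (odd) lcm of the denominators, observe that $rM_{\A\b}$ becomes an $r\A$-mediated set containing $r\b$, invoke Theorem \ref{sec3-thm1}, and undo the substitution. The one step you flag as ``the hard part'' --- that every $\bu\in M_{\A\b}\setminus(\A\cup\{\b\})$ admits mediators in $M_{\A\b}\setminus\{\b\}$, so that after scaling by $r$ they are even --- is exactly the point the paper's proof silently assumes, and it does hold without any augmentation of $M_{\A\b}$: in the construction of Lemma \ref{sec4-lm5} the point $\b=\b_0$ occurs only in the block $M_0$ coming from Lemma \ref{sec4-lm2}, and in that lemma either $r\b$ is already even (no issue) or $\b$ is appended last with mediators $\a_1$ and $2\b-\a_1$ drawn from the independently built set $\frac{2}{r}M'$, so $\b$ never serves as a mediator for any other point. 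With that one-line verification supplied, your argument is complete and matches the paper's.
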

\begin{proof}
Assume that the least common multiple of denominators appearing in the coordinates of points in $M_{\A\b}$ is $r$, which is odd. Then $f(\x)$ is nonnegative if and only if $f(\x^r)$ is nonnegative. Multiply all coordinates of points in $M_{\A\b}$ by $r$ to remove the denominators, and the obtained $rM_{\A\b}:=\{r\bu\mid\bu\in M_{\A\b}\}$ is an $r\A$-mediated set containing $r\b$. Hence by Theorem \ref{sec3-thm1}, $f(\x^r)$ is nonnegative if and only if $f(\x^r)$ can be written as $f(\x^r)=\sum_{\bu\in M_{\A\b}\backslash \A}(a_{\bu}\x^{\frac{r}{2}\bv_{\bu}}-b_{\bu}\x^{\frac{r}{2}\bw_{\bu}})^2$, $a_{\bu},b_{\bu}\in\R$, which is equivalent to $f(\x)=\sum_{\bu\in M_{\A\b}\backslash\A}(a_{\bu}\x^{\frac{1}{2}\bv_{\bu}}-b_{\bu}\x^{\frac{1}{2}\bw_{\bu}})^2$.
\end{proof}

\begin{example}
Let $f=x^4y^2+x^2y^4+1-3x^2y^2$ be the Motzkin's polynomial and $\A=\{\a_1=(0,0),\a_2=(4,2),\a_3=(2,4)\}$, $\b=(2,2)$. Then $\b=\frac{1}{3}\a_1+\frac{1}{3}\a_2+\frac{1}{3}\a_3=\frac{1}{2}(\frac{1}{3}\a_1+\frac{2}{3}\a_2)+\frac{1}{2}(\frac{1}{3}\a_1+\frac{2}{3}\a_3)$. Let $\b_1=\frac{1}{3}\a_1+\frac{2}{3}\a_2$ and $\b_2=\frac{1}{3}\a_1+\frac{2}{3}\a_3$ such that $\b=\frac{1}{2}\b_1+\frac{1}{2}\b_2$. Let $\b_3=\frac{2}{3}\a_1+\frac{1}{3}\a_2$ and $\b_4=\frac{2}{3}\a_1+\frac{1}{3}\a_3$. Then it is easy to check that $M=\{\a_1,\a_2,\a_3,\b,\b_1,\b_2,\b_3,\b_4\}$ is an $\A$-rational mediated set containing $\b$.

\begin{center}
\begin{tikzpicture}
\draw (0,0)--(1.5,3);
\draw (0,0)--(3,1.5);
\draw (1.5,3)--(3,1.5);
\draw (1,2)--(2,1);
\fill (0,0) circle (2pt);
\node[above left] (1) at (0,0) {$(0,0)$};
\node[below left] (1) at (0,0) {$\a_1$};
\fill (1.5,3) circle (2pt);
\node[above right] (2) at (1.5,3) {$(2,4)$};
\node[above left] (2) at (1.5,3) {$\a_3$};
\fill (3,1.5) circle (2pt);
\node[above right] (3) at (3,1.5) {$(4,2)$};
\node[below right] (3) at (3,1.5) {$\a_2$};
\fill (1.5,1.5) circle (2pt);
\node[above right] (4) at (1.5,1.5) {$(2,2)$};
\node[below left] (4) at (1.5,1.5) {$\b$};
\fill (1,2) circle (2pt);
\node[above left] (5) at (1,2) {$(\frac{4}{3},\frac{8}{3})$};
\node[below left] (5) at (1,2) {$\b_2$};
\fill (2,1) circle (2pt);
\node[below right] (6) at (2,1) {$(\frac{8}{3},\frac{4}{3})$};
\node[below left] (6) at (2,1) {$\b_1$};
\fill (0.5,1) circle (2pt);
\node[above left] (5) at (0.5,1) {$(\frac{2}{3},\frac{4}{3})$};
\node[below left] (5) at (0.5,1) {$\b_4$};
\fill (1,0.5) circle (2pt);
\node[below right] (6) at (1,0.5) {$(\frac{4}{3},\frac{2}{3})$};
\node[below left] (6) at (1,0.5) {$\b_3$};
\end{tikzpicture}
\end{center}

By Theorem \ref{sec3-thm3}, assume that $f=x^4y^2+x^2y^4+1-3x^2y^2=(a_1x^{\frac{2}{3}}y^{\frac{4}{3}}-b_1x^{\frac{4}{3}}y^{\frac{2}{3}})^2+(a_2xy^2-b_2x^{\frac{1}{3}}y^{\frac{2}{3}})^2+(a_3x^{\frac{2}{3}}y^{\frac{4}{3}}-b_3)^2
+(a_4x^2y-b_4x^{\frac{2}{3}}y^{\frac{1}{3}})^2+(a_5x^{\frac{4}{3}}y^{\frac{2}{3}}-b_5)^2$.
By comparing the coefficients, one obtains $f=\frac{3}{2}(x^{\frac{2}{3}}y^{\frac{4}{3}}-x^{\frac{4}{3}}y^{\frac{2}{3}})^2+(xy^2-x^{\frac{1}{3}}y^{\frac{2}{3}})^2+\frac{1}{2}(x^{\frac{2}{3}}y^{\frac{4}{3}}-1)^2
+(x^2y-x^{\frac{2}{3}}y^{\frac{1}{3}})^2+\frac{1}{2}(x^{\frac{4}{3}}y^{\frac{2}{3}}-1)^2$. 
Here we represent $f$ as a sum of five binomial squares with rational exponents.
\end{example}

\begin{lemma}\label{sec3-lm6}
Let $f(\x)\in\R[\x]$. For an odd number $r$, $f(\x)\in\SONC$ if and only if $f(\x^r)\in\SONC$.
\end{lemma}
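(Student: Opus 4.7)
The plan is to prove both implications separately, using the observation stated at the beginning of Section 3.2 that a concrete circuit polynomial $\sum_{\a\in\A}c_{\a}\x^{\a}-d\x^{\b}$ is nonnegative if and only if its "$r$-blow-up" $\sum_{\a\in\A}c_{\a}\x^{r\a}-d\x^{r\b}$ is nonnegative, which holds because $r$ is odd (so $r\a\in(2\N)^n\iff\a\in(2\N)^n$, and the simplex structure plus the interior-point condition are preserved by scaling by $r$).

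For the forward direction, suppose $f\in\SONC$ with decomposition $f=\sum_i f_i$, where each $f_i$ is a nonnegative circuit polynomial. Substituting $\x\mapsto\x^r$ is a ring homomorphism, so $f(\x^r)=\sum_i f_i(\x^r)$; by the observation each $f_i(\x^r)$ is again a nonnegative circuit polynomial, and hence $f(\x^r)\in\SONC$.

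For the reverse direction, suppose $f(\x^r)\in\SONC$. The key ingredient is the support-preserving decomposition (\ref{sec2-eq1}) from \cite[Theorem 5.5]{wang} applied to $f(\x^r)$: because $\supp(f(\x^r))=r\cdot\supp(f)$, and because $r$ is odd ensures $\Lambda(f(\x^r))=r\Lambda(f)$ and $\Gamma(f(\x^r))=r\Gamma(f)$, every circuit summand in the decomposition has the form $g(\x)=\sum_{i}c_{i}\x^{r\a_{i}}-d\,\x^{r\b}$ with $\a_{i}\in\Lambda(f)$ and $\b\in\Gamma(f)$. Thus each $g(\x)$ factors as $g(\x)=h(\x^r)$ for a uniquely determined Laurent monomial sum $h(\x)=\sum_i c_i\x^{\a_i}-d\,\x^{\b}$, which by the observation is a nonnegative circuit polynomial. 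Summing these $h$'s and using the fact that $\x\mapsto\x^r$ induces an injective endomorphism of $\R[\x]$ for odd $r$ (since $r\a=r\a'$ iff $\a=\a'$), we deduce $f(\x)=\sum h(\x)$, exhibiting $f$ as an element of $\SONC$.

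The only nontrivial step is invoking (\ref{sec2-eq1}) to guarantee that a SONC decomposition of $f(\x^r)$ can be chosen with \emph{every} summand supported inside $r\cdot\supp(f)$; without this, an arbitrary SONC decomposition of $f(\x^r)$ might involve monomials $\x^{\a'}$ with $\a'\notin r\N^n$, which could not be "pulled back" through the substitution $\x\mapsto\x^r$. Once the support-preserving form is available, the bijection between circuit polynomials and their $r$-blow-ups finishes the argument routinely.
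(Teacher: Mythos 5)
Your proof is correct and follows the same underlying idea as the paper, which disposes of the lemma in one line by reducing to the fact that a single circuit polynomial is nonnegative iff its $r$-blow-up is. Your additional care in the reverse direction --- invoking the support-preserving decomposition \eqref{sec2-eq1} to ensure every summand of a SONC decomposition of $f(\x^r)$ lives on $r\cdot\supp(f)$ and can therefore be pulled back through the injective substitution $\x\mapsto\x^r$ --- is precisely the detail the paper's ``easily follows'' glosses over, and you are right that it is the only nontrivial step.
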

\begin{proof}
It easily follows from the fact that $f(\x)$ is a nonnegative circuit polynomial if and only if $f(\x^r)$ is a nonnegative circuit polynomial for an odd number $r$.
\end{proof}

\begin{theorem}\label{sec3-thm5}
Let $f=\sum_{\a\in\Lambda(f)}c_{\a}\x^{\a}-\sum_{\b\in\Gamma(f)}d_{\b}\x^{\b}\in\R[\x]$. Let $\mathscr{F}(\b)$ be as in \eqref{fb}. For every $\b\in\Gamma(f)$ and every $\Delta\in\mathscr{F}(\b)$, let $M_{\b\Delta}$ be the $V(\Delta)$-rational mediated set containing $\b$ provided by Lemma \ref{sec4-lm5}.
Let $M=\cup_{\b\in\Gamma(f)}\cup_{\Delta\in\mathscr{F}(\b)}M_{\b\Delta}$. For each $\bu\in M\backslash\Lambda(f)$, let $\bu=\frac{1}{2}(\bv_{\bu}+\bw_{\bu}),\bv_{\bu}\ne\bw_{\bu}\in M$. Let $\tilde{\A}=\{\a\in\Lambda(f)\mid\a\notin\cup_{\b\in\Gamma(f)}\cup_{\Delta\in\mathscr{F}(\b)}V(\Delta)\}$. Then $f\in\SONC$ if and only if $f$ can be written as $f=\sum_{\bu\in M\backslash\Lambda(f)}(a_{\bu}\x^{\frac{1}{2}\bv_{\bu}}-b_{\bu}\x^{\frac{1}{2}\bw_{\bu}})^2+\sum_{\a\in\tilde{\A}}c_{\a}\x^{\a}$, $a_{\bu},b_{\bu}\in\R$.
\end{theorem}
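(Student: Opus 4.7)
The plan is to assemble two earlier results. The SONC decomposition \eqref{sec2-eq1} writes any $f \in \SONC$ as a sum of nonnegative circuit polynomials $f_{\b\Delta}$ (one per pair $(\b,\Delta)$ with $\b \in \Gamma(f)$, $\Delta \in \mathscr{F}(\b)$) plus leftover monomial squares $c_\a \x^\a$ for $\a \in \tilde{\A}$; and Theorem \ref{sec3-thm3} represents each nonnegative circuit polynomial on a trellis as a sum of binomial squares with rational exponents, indexed by a rational mediated set. The theorem is obtained by weaving these two decompositions together.

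For the necessity direction ($\Rightarrow$), I would first invoke \eqref{sec2-eq1} to write $f = \sum_{\b \in \Gamma(f)} \sum_{\Delta \in \mathscr{F}(\b)} f_{\b\Delta} + \sum_{\a \in \tilde{\A}} c_\a \x^\a$. Applying Theorem \ref{sec3-thm3} to each $f_{\b\Delta}$ together with the mediated set $M_{\b\Delta}$ supplied by Lemma \ref{sec4-lm5} yields a representation $f_{\b\Delta} = \sum_{\bu \in M_{\b\Delta} \setminus V(\Delta)} (a_\bu \x^{\bv_\bu/2} - b_\bu \x^{\bw_\bu/2})^2$. Summing over $(\b,\Delta)$, noting that $V(\Delta) \subseteq \Lambda(f)$ so that the union of the index sets equals $M \setminus \Lambda(f)$, and fixing one global splitting $\bu = \tfrac{1}{2}(\bv_\bu + \bw_\bu)$ per $\bu \in M \setminus \Lambda(f)$, we obtain the claimed form while leaving the $\sum_{\a \in \tilde{\A}} c_\a \x^\a$ part untouched.

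For the sufficiency direction ($\Leftarrow$), suppose $f$ is given in the claimed form. By Lemma \ref{sec4-lm5}, every coordinate of a point in $M$ has odd denominator, so there is an odd $r \in \N$ clearing all denominators; moreover, after multiplication by $r$ the numerators of $\bv_\bu, \bw_\bu$ are still even, so $r\bv_\bu/2, r\bw_\bu/2 \in \N^n$. Substituting $\x \mapsto \x^r$ turns each binomial square into a genuine polynomial binomial square, which is a nonnegative circuit polynomial on the edge $\{r\bv_\bu, r\bw_\bu\}$ with interior point $r\bu$, while each $c_\a \x^{r\a}$ with $\a \in \tilde{\A} \subseteq \Lambda(f)$ becomes a monomial square. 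Hence $f(\x^r) \in \SONC$, and Lemma \ref{sec3-lm6} gives $f(\x) \in \SONC$.

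The main obstacle is the bookkeeping in the necessity step: one must verify that the union $\bigcup_{\b,\Delta}(M_{\b\Delta} \setminus V(\Delta))$ coincides with $M \setminus \Lambda(f)$, and that when the same intermediate point $\bu$ appears in several $M_{\b\Delta}$'s with possibly different local splittings, the corresponding binomial squares can be re-expressed via a single fixed global splitting without changing the polynomial identity. These reductions are routine once one fixes a global choice of $(\bv_\bu,\bw_\bu)$ for each $\bu \in M \setminus \Lambda(f)$ and absorbs the per-$(\b,\Delta)$ coefficients accordingly, but they deserve careful spelling out.
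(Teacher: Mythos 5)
Your proposal is correct and follows essentially the same route as the paper: the necessity direction combines the SONC decomposition \eqref{sec2-eq1} with Theorem \ref{sec3-thm3} applied to each $f_{\b\Delta}$ via the mediated sets from Lemma \ref{sec4-lm5}, and the sufficiency direction clears the odd denominators with the substitution $\x\mapsto\x^r$ and invokes Lemma \ref{sec3-lm6}. The bookkeeping issue you flag (overlapping mediated sets and the choice of a global splitting for each $\bu$) is real but is glossed over in the paper's proof as well, so your treatment is, if anything, slightly more careful.
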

\begin{proof}
Suppose $f\in\SONC$. By Theorem 5.5 in \cite{wang}, we can write $f$ as $f=\sum_{\b\in\Gamma(f)}\sum_{\Delta\in\mathscr{F}(\b)}f_{\b\Delta}+\sum_{\a\in\tilde{\A}}c_{\a}\x^{\a}$ such that every $f_{\b\Delta}=\sum_{\a\in V(\Delta)}c_{\b\Delta\a}\x^{\a}-d_{\b\Delta}\x^{\b}$ is a nonnegative circuit polynomial. We have $f_{\b\Delta}=\sum_{\bu\in M_{\A\b}\backslash\A}(a_{\bu}\x^{\frac{1}{2}\bv_{\bu}}-b_{\bu}\x^{\frac{1}{2}\bw_{\bu}})^2$, $a_{\bu},b_{\bu}\in\R$ by Theorem \ref{sec3-thm3}. Thus $f=\sum_{\bu\in M\backslash\Lambda(f)}(a_{\bu}\x^{\frac{1}{2}\bv_{\bu}}-b_{\bu}\x^{\frac{1}{2}\bw_{\bu}})^2+\sum_{\a\in\tilde{\A}}c_{\a}\x^{\a}$, $a_{\bu},b_{\bu}\in\R$.

Suppose $f=\sum_{\bu\in M\backslash\Lambda(f)}(a_{\bu}\x^{\frac{1}{2}\bv_{\bu}}-b_{\bu}\x^{\frac{1}{2}\bw_{\bu}})^2+\sum_{\a\in\tilde{\A}}c_{\a}\x^{\a}$, $a_{\bu},b_{\bu}\in\R$. Assume that the least common multiple of denominators appearing the coordinates of points in $M$ is $r$, which is odd. Then $f(\x^r)=\sum_{\bu\in M\backslash\Lambda(f)}(a_{\bu}\x^{\frac{r}{2}\bv_{\bu}}-b_{\bu}\x^{\frac{r}{2}\bw_{\bu}})^2+\sum_{\a\in\tilde{\A}}c_{\a}\x^{r\a}$, $a_{\bu},b_{\bu}\in\R$, which is a sum of nonnegative circuit polynomials since every binomial square (and monomial square) is a nonnegative circuit polynomial. Hence by Lemma \ref{sec3-lm6}, $f(\x)\in\SONC$.
\end{proof}

\section{Second-order cone representations of SONC cones}

Second-order cone programming (SOCP) plays an important role in convex optimization and can be handled via very efficient algorithms. It is meaningful to investigate which convex cone admits a second-order cone representation. If such a representation exists for a given convex cone, then it is possible to design efficient algorithms for optimization problems over the convex cone. In \cite{fa}, Fawzi proved that positive semidefinite cones do not admit any second-order cone representations in general, which implies that SOS cones do not admit any second-order cone representations in general. In this section, we prove that dramatically unlike the SOS cones, SONC cones always admit second-order cone representations. Let us discuss it in more details. We denote by $\mathcal{Q}^k:=\mathcal{Q}\times\cdots \times \mathcal{Q}$ the Cartesian product of $k$ copies of a second-order cone $\mathcal{Q}$. A {\em linear slice} of $\mathcal{Q}^k$ is an intersection of $\mathcal{Q}^k$ with a linear subspace.
\begin{definition}
A convex cone $C\subseteq\R^m$ has a {\em second-order cone lift of size $k$} (or simply a {\em $\mathcal{Q}^k$-lift}) if it can be written as the projection of a slice of $\mathcal{Q}^k$, that is, there is a subspace $L$ of $\mathcal{Q}^k$ and a linear map $\pi\colon\mathcal{Q}^k\rightarrow\R^m$ such that $C=\pi(\mathcal{Q}^k\cap L)$.
\end{definition}

We give the following definition of SONC cones supported on given lattice points.
\begin{definition}
Given sets of lattice points $\A\subseteq(2\N)^n$, $\B_1\subseteq\Conv(\A)\cap(2\N)^n$ and $\B_2\subseteq\Conv(\A)\cap(\N^n\backslash(2\N)^n)$ such that $\A\cap\B_1=\varnothing$, define the SONC cone supported on $\A,\B_1,\B_2$ as
\begin{align*}
\SONC_{\A,\B_1,\B_2}:=& \{(\mathbf{c}_{\A},\mathbf{d}_{\B_1},\mathbf{d}_{\B_2})\in\R_+^{|\A|}\times\R_+^{|\B_1|}\times\R^{|\B_2|}\\
&\mid\sum_{\a\in\A} c_{\a}\x^{\a}-\sum_{\b\in\B_1\cup\B_2}d_{\b}\x^{\b}\in\SONC \},
\end{align*}
where $\mathbf{c}_{\A}=(c_{\a})_{\a\in\A}$, $\mathbf{d}_{\B_1}=(d_{\b})_{\b\in\B_1}$ and $\mathbf{d}_{\B_2}=(d_{\b})_{\b\in\B_2}$. It is easy to check that $\SONC_{\A,\B_1,\B_2}$ is indeed a convex cone.
\end{definition}

Let $\mathbb{S}^2_+$ be the convex cone of $2\times2$ positive semidefinite matrices, i.e.,
\[\mathbb{S}^2_+:=\left\{\begin{bmatrix}a&b\\b&c\end{bmatrix}\in\R^{2\times2}\mid\begin{bmatrix}a&b\\b&c\end{bmatrix}\textrm{ is positive semidefinite}\right\}.\]
\begin{lemma}
$\mathbb{S}^2_+$ is a $3$-dimensional rotated second-order cone.
\end{lemma}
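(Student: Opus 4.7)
The plan is to exhibit an explicit linear isomorphism between $\mathbb{S}^2_+$ and the rotated second-order cone $\mathbf{K}^3 = \{(a_1,a_2,a_3)\in\R^3 \mid 2a_1a_2 \geq a_3^2,\ a_1\geq 0,\ a_2\geq 0\}$, and then verify it carries one cone onto the other. This reduces the statement to the standard Sylvester-type characterization of $2\times 2$ positive semidefinite matrices.

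First I would recall (or note as folklore) that a symmetric matrix $\begin{bmatrix}a & b\\ b & c\end{bmatrix}$ is positive semidefinite if and only if $a\geq 0$, $c\geq 0$, and $ac - b^2 \geq 0$. One direction is immediate from evaluating the quadratic form on the standard basis vectors and on a suitable test vector (or from the fact that both principal $1\times 1$ minors and the $2\times 2$ determinant must be nonnegative); the other direction follows by completing the square, writing
\begin{equation*}
a x^2 + 2bxy + cy^2 = a\left(x + \tfrac{b}{a}y\right)^2 + \tfrac{ac-b^2}{a}\,y^2
\end{equation*}
when $a>0$, and handling the degenerate case $a=0$ separately (which forces $b=0$).

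Next I would define the linear map
\begin{equation*}
\varphi\colon \R^{2\times 2}_{\mathrm{sym}} \longrightarrow \R^3,\qquad
\varphi\!\begin{bmatrix}a & b\\ b & c\end{bmatrix} = (a,\,c,\,\sqrt{2}\,b).
\end{equation*}
This is clearly a linear bijection between the $3$-dimensional space of symmetric $2\times 2$ matrices and $\R^3$. Under $\varphi$, the inequality $ac - b^2 \geq 0$ becomes $2ac \geq 2b^2 = (\sqrt{2}b)^2$, i.e.\ $2a_1a_2 \geq a_3^2$, and the sign conditions $a\geq 0$ and $c\geq 0$ become $a_1\geq 0$ and $a_2\geq 0$. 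By the characterization recalled above, $\varphi$ therefore restricts to a bijection $\mathbb{S}^2_+ \to \mathbf{K}^3$, which is the desired identification.

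There is really no substantive obstacle here; the only thing to be slightly careful about is the factor $\sqrt{2}$ in the third coordinate, which is needed to match the rotated second-order cone's normalization $2a_1a_2 \geq a_3^2$ (as opposed to $a_1a_2\geq a_3^2$). With this choice the identification is an honest linear isomorphism of cones, completing the proof.
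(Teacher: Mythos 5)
Your proof is correct and takes essentially the same route as the paper, whose own proof is a one-liner observing that positive semidefiniteness of $\begin{bmatrix}a&b\\b&c\end{bmatrix}$ is equivalent to $a\ge0$, $c\ge0$, $ac\ge b^2$ and concluding ``by definition.'' You are in fact slightly more careful than the paper, since you make explicit the linear rescaling $b\mapsto\sqrt{2}\,b$ needed to reconcile $ac\ge b^2$ with the normalization $2a_1a_2\ge a_3^2$ in the paper's definition of the rotated second-order cone.
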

\begin{proof}
Suppose $A=\begin{bmatrix}a&b\\b&c\end{bmatrix}$ is a $2\times2$ symmetric matrix. The condition of $A$ to be positive semidefinite is $a\ge0,c\ge0,ac\ge b^2$. 
Thus $\mathbb{S}^2_+$ is a rotated second-order cone by definition.
\end{proof}

\begin{theorem}
For $\A\subseteq(2\N)^n$, $\B_1\subseteq\Conv(\A)\cap(2\N)^n$ and $\B_2\subseteq\Conv(\A)\cap(\N^n\backslash(2\N)^n)$ such that $\A\cap\B_1=\varnothing$, the convex cone $\SONC_{\A,\B_1,\B_2}$ has an $(\mathbb{S}^2_+)^k$-lift for some $k\in\N$.
\end{theorem}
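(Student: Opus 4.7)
The approach will be constructive, combining Theorem~\ref{sec3-thm5} (which recasts SONC membership as a sum of binomial squares with rational exponents) with the preceding lemma identifying $\mathbb{S}^2_+$ with a rotated second-order cone. The output will be an explicit slice of $(\mathbb{S}^2_+)^k$ whose image under a fixed linear projection equals $\SONC_{\A,\B_1,\B_2}$, so the existence of the lift will be exhibited rather than merely asserted.

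\emph{Combinatorial data and PSD variables.} First, for each $\b\in\B_1\cup\B_2$ and each simplex $\Delta\in\mathscr{F}(\b)$ with $V(\Delta)\subseteq\A$, I would invoke Lemma~\ref{sec4-lm5} to obtain a $V(\Delta)$-rational mediated set $M_{\b\Delta}$ containing $\b$, and fix once and for all a midpoint expression $\bu=\tfrac12(\bv_\bu+\bw_\bu)$ with $\bv_\bu,\bw_\bu\in M_{\b\Delta}$ for every $\bu\in M_{\b\Delta}\setminus V(\Delta)$. Setting $k:=\sum_{\b,\Delta}\#(M_{\b\Delta}\setminus V(\Delta))$, I would then introduce, for each triple $(\b,\Delta,\bu)$, one $2\times 2$ PSD matrix $X_{\b\Delta\bu}=\bigl[\begin{smallmatrix}p_{\b\Delta\bu}&s_{\b\Delta\bu}\\ s_{\b\Delta\bu}&q_{\b\Delta\bu}\end{smallmatrix}\bigr]\in\mathbb{S}^2_+$, whose intended role is to encode a sum of rank-one contributions of the shape $[a_\bu^2,a_\bu b_\bu;a_\bu b_\bu,b_\bu^2]$, so that $p\x^{\bv_\bu}-2s\x^{\bu}+q\x^{\bw_\bu}$ becomes a nonnegative circuit polynomial. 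Auxiliary nonnegative scalars $c_{\b\Delta\a}$ (for $\a\in V(\Delta)$) and $\tilde c_\a$ (for $\a\in\A$), together with real scalars $d_{\b\Delta}$, will carry the local-to-global coefficient information.

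\emph{Linear constraints defining the slice.} The slice will be cut out by the coefficient-matching identities at every relevant lattice point. Inside each circuit piece $f_{\b\Delta}$: the coefficient at $\a\in V(\Delta)$ equals $c_{\b\Delta\a}=\sum_{\bv_\bu=\a}p_{\b\Delta\bu}+\sum_{\bw_\bu=\a}q_{\b\Delta\bu}$; the coefficient at any intermediate midpoint $\bu\in M_{\b\Delta}\setminus(V(\Delta)\cup\{\b\})$ must vanish, giving $2s_{\b\Delta\bu}=\sum_{\bv_{\bu'}=\bu}p_{\b\Delta\bu'}+\sum_{\bw_{\bu'}=\bu}q_{\b\Delta\bu'}$; and the coefficient at $\b$ yields $d_{\b\Delta}$ analogously. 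On the global side, $c_\a=\sum_{(\b,\Delta):\a\in V(\Delta)}c_{\b\Delta\a}+\tilde c_\a$ and $d_\b=\sum_{\Delta\in\mathscr{F}(\b)}d_{\b\Delta}$. All these identities are linear in the entries of the matrices and the auxiliary scalars, so together they define a linear subspace $L$ of the ambient space, and the projection that forgets every variable except $(\mathbf{c}_\A,\mathbf{d}_{\B_1},\mathbf{d}_{\B_2})$ is the required linear map.

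\emph{Equivalence and the main obstacle.} For the forward inclusion I would apply the decomposition~\eqref{sec2-eq1} together with Theorem~\ref{sec3-thm5} to each circuit piece $f_{\b\Delta}$; the rank-one matrices $\bigl[\begin{smallmatrix}a_\bu^2&a_\bu b_\bu\\ a_\bu b_\bu&b_\bu^2\end{smallmatrix}\bigr]$ read off from the binomial squares satisfy every linear constraint. For the converse, given a feasible assignment, the spectral decomposition of each $X_{\b\Delta\bu}$ as a sum of at most two rank-one PSD matrices produces two binomial squares per triple, whose combination via the linear identities reconstructs $f$ as a sum of binomial squares in rational exponents, placing $f$ in $\SONC$ by Theorem~\ref{sec3-thm5}. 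The principal technical difficulty I anticipate is the bookkeeping in the coefficient-matching step: a single lattice point can play the role of vertex in several simplices, of intermediate midpoint within one mediated set, and of element of $\B_1\cup\B_2$ all at once, and the equations must cleanly separate these overlapping roles and also handle the eventuality that a chosen $\bv_\bu$ or $\bw_\bu$ happens to be a vertex of $\Delta$ or to equal $\b$. The sign constraints $c_\a\ge 0$ and $d_\b\ge 0$ for $\b\in\B_1$ inherent in the definition of the cone can be appended as $1$-dimensional second-order cone constraints and pose no further structural obstacle.
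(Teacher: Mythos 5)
Your proposal is correct and follows essentially the same route as the paper: invoke Theorem~\ref{sec3-thm5} via the rational mediated sets of Lemma~\ref{sec4-lm5}, encode each binomial square as a $2\times2$ PSD block (a rotated second-order cone), let coefficient matching cut out the linear slice, and project onto the coefficient vector. The paper packages the bookkeeping more compactly as a single identity $f=(\x^B)^TQ\x^B$ with $Q=\sum_i Q_i$, which is exactly the system of linear constraints you write out explicitly.
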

\begin{proof}
For every $\b\in\B_1\cup\B_2$, let $\mathscr{F}(\b)$ be as in \eqref{fb}. Then
for every $\b\in\B_1\cup\B_2$ and every $\Delta\in\mathscr{F}(\b)$, let $M_{\b\Delta}$ be the $V(\Delta)$-rational mediated set containing $\b$ provided by Lemma \ref{sec4-lm5}.
Let $M=\cup_{\b\in\B_1\cup\B_2}\cup_{\Delta\in\mathscr{F}(\b)}M_{\b\Delta}$. 
For each $\bu_i\in M\backslash\A$, let us write  $\bu_i=\frac{1}{2}(\bv_i+\bw_i)$. Let $B=\cup_{\bu_i\in M\backslash\A}\{\frac{1}{2}\bv_i,\frac{1}{2}\bw_i\}$, $\tilde{\A}=\{\a\in\Lambda(f)\mid\a\notin\cup_{\b\in\Gamma(f)}\cup_{\Delta\in\mathscr{F}(\b)}V(\Delta)\}$ and $k=\#M\backslash\A+\#\tilde{\A}$.

Then by Theorem \ref{sec3-thm5}, a polynomial $f$ is in $\SONC_{\A,\B_1,\B_2}$ if and only if $f$ can be written as $f=\sum_{\bu_i\in M\backslash\A}(a_{i}\x^{\frac{1}{2}\bv_i}-b_{i}\x^{\frac{1}{2}\bw_i})^2+\sum_{\a\in\tilde{\A}}c_{\a}\x^{\a}$, $a_i,b_i\in\R$, which is equivalent to the existence of a symmetric matrix $Q=\sum_{i=1}^kQ_i$ such that $f=(\x^B)^TQ\x^B$, where $Q_i$ is a symmetric matrix with zeros everywhere except either at the four positions corresponding to the monomials $\x^{\frac{1}{2}\bv_i}$, $\x^{\frac{1}{2}\bw_i}$ or at the position corresponding to a monomial $\x^{\frac{1}{2}\a}$ for some  $\a\in\tilde{\A}$.
This leads respectively to either four entries forming a $2\times2$ positive semidefinite submatrix or one single  positive entry.

Let $\pi:(\mathbb{S}^2_+)^{k}\rightarrow\SONC_{\A,\B_1,\B_2}$
be the linear map that maps an element in $Q_1\times\cdots\times Q_k$ to the coefficient vector of $f$ which is in $\SONC_{\A,\B_1,\B_2}$ via the equality $f=(\x^B)^TQ\x^B$ with $Q=\sum_{i=1}^kQ_i$. So we obtain an $(\mathbb{S}^2_+)^k$-lift for $\SONC_{\A,\B_1,\B_2}$.

\end{proof}


\section{SONC optimization via second-order cone programming}
In this section, we tackle the unconstrained polynomial optimization problem via SOCP, based on the representation of SONC cones derived in the previous section.

The unconstrained polynomial optimization problem can be formulated as follows:
\begin{equation}\label{sec3-eq00}
(\textrm{P}):\quad\begin{cases}
\sup&\xi\\
\textrm{s.t.}&f(\x)-\xi\ge0,\quad\x\in\R^n.
\end{cases}
\end{equation}
Let us  denote by $\xi^*$ the optimal value of (\ref{sec3-eq00}).

Replace the nonnegativity constraint in (\ref{sec3-eq00}) by the following one to obtain a SONC relaxation of  problem (\ref{sec3-eq00}):
\begin{equation}\label{sec3-eq5}
(\textrm{SONC}):\quad\begin{cases}
\sup&\xi\\
\textrm{s.t.}&f(\x)-\xi\in\SONC.
\end{cases}
\end{equation}
Let us denote by $\xi_{sonc}$ the optimal value of (\ref{sec3-eq5}).

\subsection{Conversion to PN-polynomials}
Let $f=\sum_{\a\in\Lambda(f)}c_{\a}\x^{\a}-\sum_{\b\in\Gamma(f)}d_{\b}\x^{\b}\in\R[\x]$. If $d_{\b}>0$ for all $\b\in\Gamma(f)$, then we call $f$ a {\em PN-polynomial}. The ``PN" in PN-polynomial is short for ``positive part plus negative part". The positive part is given by $\sum_{\a\in\Lambda(f)}c_{\a}\x^{\a}$ and the negative part is given by $-\sum_{\b\in\Gamma(f)}d_{\b}\x^{\b}$. For a PN-polynomial $f(\x)$, it is clear that
\begin{equation*}
f(\x)\ge0\textrm{ for all }\x\in\R^n\Longleftrightarrow f(\x)\ge0\textrm{ for all }\x\in\R_+^n.
\end{equation*}
Moreover, we have
\begin{lemma}\label{sec4-lm}
Let $f(\x)\in\R[\x]$ be a PN-polynomial. Then for any positive integer $k$, $f(\x)\in\SONC$ if and only if $f(\x^k)\in\SONC$.
\end{lemma}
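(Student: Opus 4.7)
The plan is to handle both implications separately. The forward direction $f\in\SONC \Rightarrow f(\x^k)\in\SONC$ holds for every positive integer $k$ and does not require the PN hypothesis: starting from a decomposition $f=\sum_i f_i$ with $f_i=\sum_{\a\in\A_i} c_{i,\a}\x^\a - d_i\x^{\b_i}$, the substitution $\x\mapsto\x^k$ yields $f(\x^k)=\sum_i f_i(\x^k)$, and each $f_i(\x^k)$ is a circuit polynomial with vertices $k\A_i$, interior lattice point $k\b_i$, and unchanged circuit number $\Theta_{f_i}$. For $k$ even we have $k\b_i\in(2\N)^n$, so the nonnegativity criterion reduces to $d_i\leq\Theta_{f_i}$, which follows from $d_i\leq|d_i|\leq\Theta_{f_i}$; for odd $k$ the claim is Lemma~\ref{sec3-lm6}.

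For the backward implication, Lemma~\ref{sec3-lm6} again disposes of the odd case, so I focus on $k$ even. Applying Theorem~5.5 of \cite{wang} to $f(\x^k)\in\SONC$ and using the PN hypothesis (which gives $\Lambda(f(\x^k))=k\Lambda(f)$ and $\Gamma(f(\x^k))=k\Gamma(f)$), one obtains
\[
f(\x^k) = \sum_{\b\in\Gamma(f)} \sum_{\Delta} f_{k\b\Delta}(\x) + \sum_{\a'\in\tilde{\A}} c_{\a'}\x^{\a'},
\]
where the inner sum runs over simplices $\Delta$ with vertices in $k\Lambda(f)$ and $k\b\in\Delta^\circ$, each $f_{k\b\Delta} = \sum_j c_j \x^{k\a_j} - d_{k\b\Delta}\x^{k\b}$ is a nonnegative circuit polynomial, $\tilde{\A}\subseteq k\Lambda(f)$, and matching the coefficient at $\x^{k\b}$ gives $\sum_\Delta d_{k\b\Delta}=d_\b>0$.

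The crux is to reshuffle so that every $d_{k\b\Delta}\geq0$. For fixed $\b$, set $D^+:=\sum_{d_{k\b\Delta}>0} d_{k\b\Delta}$ and $D^-:=\sum_{d_{k\b\Delta}\leq 0}|d_{k\b\Delta}|$, so $D^+-D^-=d_\b>0$ and in particular $D^+>D^-\geq0$. Any "negative" circuit $f_{k\b\Delta^-}$ is trivially a sum of nonnegative monomials $\sum_j c_j\x^{k\a_j}+|d_{k\b\Delta^-}|\x^{k\b}$; I move the $\x^{k\a_j}$-terms (whose exponents lie in $k\Lambda(f)$) into the monomial collection and absorb the aggregated residual $D^-\x^{k\b}$ into the remaining "positive" circuits by replacing each $d_{k\b\Delta^+}$ with $d_{k\b\Delta^+}\cdot d_\b/D^+\in(0,d_{k\b\Delta^+}]$. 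The modified circuits remain nonnegative since their circuit numbers are unchanged while their inner coefficients are only decreased. The outcome is a decomposition of $f(\x^k)$ in which every $d_{k\b\Delta}>0$ and every loose monomial is supported in $k\Lambda(f)$.

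The descent step is then straightforward: defining $\tilde f_{\b\tilde\Delta}(\y):=\sum_j c_j\y^{\a_j} - d_{k\b\Delta}\y^\b$, the circuit number coincides with $\Theta_{f_{k\b\Delta}}$ and $|d_{k\b\Delta}|=d_{k\b\Delta}\leq\Theta_{\tilde f_{\b\tilde\Delta}}$, so $\tilde f_{\b\tilde\Delta}$ is nonnegative regardless of the parity of $\b$. Descended loose monomials land in $\Lambda(f)$ and remain nonnegative. Summing the descended terms produces a polynomial $g$ satisfying $g(\y^k)=f(\y^k)$, whence $g=f$ by linear independence of distinct monomials $\y^{k\gamma}$, yielding the sought SONC decomposition. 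The main obstacle is precisely the reshuffling step: a naive term-by-term descent fails when a circuit carries $d_{k\b\Delta}<0$ with $\b\notin(2\N)^n$, since then $|d_{k\b\Delta}|$ could exceed $\Theta$ and the descended circuit would violate the nonnegativity criterion $|d|\leq\Theta$; the PN hypothesis $d_\b>0$ is exactly what guarantees $D^+>D^-$ and makes the proportional absorption feasible.
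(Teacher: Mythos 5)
Your argument is correct and follows essentially the same route as the paper's one-line proof: decompose into circuits supported on the scaled support and exploit the invariance of the circuit number under $\x\mapsto\x^k$ together with the fact that a circuit with a single genuinely negative term ($d>0$) is nonnegative iff $d\le\Theta$, a condition blind to the parity of the inner exponent. The only substantive addition is your sign-reshuffling step, which carefully patches a point the paper leaves implicit, namely that the decomposition of $f(\x^k)$ obtained from Theorem 5.5 of \cite{wang} can be taken cancellation-free, so that every inner coefficient $d_{k\b\Delta}$ is already positive and the term-by-term descent goes through directly.
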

\begin{proof}
It is immediate from the fact that a polynomial $f(\x)$ with exactly one negative term is a nonnegative circuit polynomial if and only if $f(\x^k)$ is a nonnegative circuit polynomial for any positive integer $k\in\N$.
\end{proof}

\begin{theorem}\label{sec4-thm}
Let $f=\sum_{\a\in\Lambda(f)}c_{\a}\x^{\a}-\sum_{\b\in\Gamma(f)}d_{\b}\x^{\b}\in\R[\x]$ be a PN-polynomial. Let $\mathscr{F}(\b)$ be as in \eqref{fb}. For every $\b\in\Gamma(f)$ and every $\Delta\in\mathscr{F}(\b)$, let $M_{\b\Delta}$ be a $V(\Delta)$-rational mediated set containing $\b$. Let $M=\cup_{\b\in\Gamma(f)}\cup_{\Delta\in\mathscr{F}(\b)}M_{\b\Delta}$ and $\tilde{\A}=\{\a\in\Lambda(f)\mid\a\notin\cup_{\b\in\Gamma(f)}\cup_{\Delta\in\mathscr{F}(\b)}V(\Delta)\}$. For each $\bu\in M\backslash\Lambda(f)$, let $\bu=\frac{1}{2}(\bv+\bw)$. Then $f\in\SONC$ if and only if $f$ can be written as $f=\sum_{\bu\in M\backslash\Lambda(f)}(a_{\bu}\x^{\frac{1}{2}\bv}-b_{\bu}\x^{\frac{1}{2}\bw})^2+\sum_{\a\in\tilde{\A}}c_{\a}\x^{\a}$, $a_{\bu},b_{\bu}\in\R$.
\end{theorem}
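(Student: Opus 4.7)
The plan is to mimic the argument used for Theorem~\ref{sec3-thm5}, but exploit Lemma~\ref{sec4-lm} (which is available precisely because $f$ is a PN-polynomial) to relax the oddness requirement on the substitution exponent. That relaxation is what allows us to replace the carefully constructed mediated set of Lemma~\ref{sec4-lm5} by an arbitrary $V(\Delta)$-rational mediated set $M_{\b\Delta}$.

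For the forward direction, I would start from the canonical SONC decomposition \eqref{sec2-eq1} obtained from \cite[Theorem~5.5]{wang}, writing
$f=\sum_{\b\in\Gamma(f)}\sum_{\Delta\in\mathscr{F}(\b)}f_{\b\Delta}+\sum_{\a\in\tilde{\A}}c_{\a}\x^{\a}$,
where each $f_{\b\Delta}$ is a nonnegative circuit polynomial supported on $V(\Delta)\cup\{\b\}$. For every such $(\b,\Delta)$, set $r_{\b\Delta}$ to be twice the least common multiple of the denominators appearing in the coordinates of $M_{\b\Delta}$; then $r_{\b\Delta}$ is even, the scaled set $r_{\b\Delta}M_{\b\Delta}$ consists of even lattice points, and it is an $r_{\b\Delta}V(\Delta)$-mediated set containing $r_{\b\Delta}\b$. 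I would then verify that $f_{\b\Delta}(\x^{r_{\b\Delta}})$ is a nonnegative circuit polynomial and apply Theorem~\ref{sec3-thm1} to obtain
$f_{\b\Delta}(\x^{r_{\b\Delta}})=\sum_{\bu\in M_{\b\Delta}\setminus V(\Delta)}\bigl(a_{\bu}\x^{\frac{r_{\b\Delta}}{2}\bv}-b_{\bu}\x^{\frac{r_{\b\Delta}}{2}\bw}\bigr)^{2}$.
A coefficient-matching argument (identifying monomials $\x^{r_{\b\Delta}\gamma}$ with formal rational-exponent monomials $\x^{\gamma}$) transfers this into the desired identity $f_{\b\Delta}(\x)=\sum(a_{\bu}\x^{\frac{1}{2}\bv}-b_{\bu}\x^{\frac{1}{2}\bw})^{2}$, and summing over $(\b,\Delta)$ together with the leftover monomials $c_{\a}\x^{\a}$ yields the stated representation.

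For the backward direction, suppose $f$ admits the given representation. Let $r$ be twice the least common multiple of all denominators occurring in the coordinates of points in $M$; then every exponent $\tfrac{r}{2}\bv$, $\tfrac{r}{2}\bw$ lies in $(2\N)^{n}$, and every $r\a$ for $\a\in\tilde{\A}\subseteq(2\N)^{n}$ still lies in $(2\N)^{n}$. Substituting $\x\mapsto\x^{r}$ produces
$f(\x^{r})=\sum_{\bu\in M\setminus\Lambda(f)}\bigl(a_{\bu}\x^{\frac{r}{2}\bv}-b_{\bu}\x^{\frac{r}{2}\bw}\bigr)^{2}+\sum_{\a\in\tilde{\A}}c_{\a}\x^{r\a}$,
a sum of binomial squares plus monomial squares, hence an element of $\SONC$. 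Since $f$ is a PN-polynomial, Lemma~\ref{sec4-lm} applied to $k=r$ yields $f(\x)\in\SONC$.

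The main obstacle is the parity bookkeeping in the forward direction: when $\b\notin(2\N)^{n}$, the nonnegativity criterion for $f_{\b\Delta}$ is the stronger inequality $|d_{\b\Delta}|\le\Theta_{f_{\b\Delta}}$, whereas after substitution $r_{\b\Delta}\b\in(2\N)^{n}$ and only the weaker condition $d_{\b\Delta}\le\Theta_{f_{\b\Delta}}$ is needed. One must check this is automatic (it is, because the stronger inequality implies the weaker one), and this parity gymnastics is precisely where we gain the flexibility to use an arbitrary $V(\Delta)$-rational mediated set rather than the odd-denominator version of Lemma~\ref{sec4-lm5}. The PN hypothesis is what makes the corresponding \emph{even} substitution exponent $r_{\b\Delta}$ admissible via Lemma~\ref{sec4-lm}, which is the single structural input distinguishing this theorem from Theorem~\ref{sec3-thm5}.
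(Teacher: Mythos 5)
Your proposal is correct and follows essentially the same route as the paper, whose proof is just the one-line remark that the result "follows easily from Lemma \ref{sec4-lm} and Theorem \ref{sec3-thm1}"; you have simply spelled out the intended argument (decompose via \cite[Theorem 5.5]{wang}, substitute an even power $r$ to turn each rational mediated set into a genuine mediated set of even lattice points, apply Theorem \ref{sec3-thm1}, and use Lemma \ref{sec4-lm} --- which is where the PN hypothesis enters --- to return from $f(\x^r)$ to $f$). Your observation that $|d|\le\Theta_f$ implies $d\le\Theta_f$, so the forward direction survives the parity change at $\b$, correctly identifies the only point needing care.
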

\begin{proof}
It follows easily from Lemma \ref{sec4-lm} and Theorem \ref{sec3-thm1}.
\end{proof}

The significant difference between Theorem \ref{sec3-thm5} and Theorem \ref{sec4-thm} is that to represent a SONC PN-polynomial as a sum of binomial squares, we do not require the denominators of coordinates of points in $\A$-rational mediated sets to be odd. 
By virtue of this fact, for given trellis $\A=\{\a_1,\ldots,\a_m\}$ and lattice point $\b\in\Conv(\A)^{\circ}$, we can then construct an $\A$-rational mediated set $M_{\A\b}$ containing $\b$ which is smaller than that the one from Lemma \ref{sec4-lm5}.
\begin{lemma}\label{sec4-lm4}
For a trellis $\A=\{\a_1,\ldots,\a_m\}$ and a lattice point $\b\in\Conv(\A)^{\circ}$, there exists an $\A$-rational mediated set $M_{\A\b}$ containing $\b$.
\end{lemma}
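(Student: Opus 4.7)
The easiest route is to observe that Lemma \ref{sec4-lm5} already produces an $\A$-rational mediated set containing $\b$ with extra parity properties, so the weaker claim of Lemma \ref{sec4-lm4} follows as an immediate corollary. The point of stating Lemma \ref{sec4-lm4} separately, however, is presumably that without the odd-denominator/even-numerator bookkeeping one can build a considerably smaller mediated set, which is advantageous in the PN-polynomial setting of Theorem \ref{sec4-thm}. My plan is therefore to give a self-contained construction that mirrors the skeleton of the proof of Lemma \ref{sec4-lm5} but strips out the parity maintenance.

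First I would reduce to a one-dimensional problem exactly as in Lemma \ref{sec4-lm5}. Write $\b=\sum_{i=1}^{m}\frac{q_i}{p}\a_i$ in lowest terms with $p=\sum q_i$, pick any vertex $\a_1$, and split $\b=\frac{q_1}{p}\a_1+\frac{p-q_1}{p}\b_1$, where $\b_1=\sum_{i\ge 2}\frac{q_i}{p-q_1}\a_i$ lies on the face opposite $\a_1$. Recursing on the sub-trellis $\{\a_2,\ldots,\a_m\}$ produces a finite collection of intermediate rational points $\b_0=\b,\b_1,\ldots,\b_l$, each eventually expressible as a convex combination of two earlier constructed points or two vertices of $\A$. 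This is the same scheme used in Lemma \ref{sec4-lm5}, except that here I never need to split off two vertices simultaneously to control parity, so a single peeling step at each level suffices.

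For each pair appearing in the recursion I would then apply a simplified one-dimensional analog of Lemma \ref{sec4-lm2}: given two rational endpoints and a rational point strictly between them, there exists a rational mediated set containing that point. Because the output is allowed to have arbitrary rational coordinates, this is strictly easier than the integer version in Lemma \ref{sec4-lm3}; in particular, one can always bisect the current interval at its rational midpoint and recurse on whichever half contains the target, with no case distinction on the parity of the denominator. Alternatively, one can simply clear denominators by the LCM and invoke Lemma \ref{sec4-lm3} as a black box. Taking $M_{\A\b}$ to be the union of $\A$, the intermediate points $\b_i$, and all points generated by the one-dimensional constructions along each segment yields the desired set.

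The main obstacle is not conceptual but organizational: one must verify that the recursion terminates and that every non-vertex element of the final $M_{\A\b}$ is genuinely the midpoint of two distinct elements of $M_{\A\b}$. Termination follows because at each step either the number of vertices of the sub-trellis drops by one or the common denominator strictly decreases; the midpoint property follows from the definition of $\widetilde{A}(\cdot)$ together with the way each $\b_i$ is introduced into the recursion. Neither verification is deeper than what is already done in Lemma \ref{sec4-lm5}, so the whole argument is essentially a simplification of that proof.
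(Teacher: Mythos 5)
Your construction is correct and is essentially the paper's own proof: the paper likewise peels off one vertex at a time to reduce to one-dimensional segments and applies Lemma \ref{sec4-lm2} to each pair $(\a_{i+1},\b_{i+1})$ containing $\b_i$, exactly as you describe (and, as you note, the lemma also follows at once as a corollary of Lemma \ref{sec4-lm5}). One caveat: your ``always bisect at the rational midpoint'' alternative does not work as stated, since for a non-dyadic target such as $\b=\tfrac{1}{3}\a_1+\tfrac{2}{3}\a_2$ the bisection never lands on $\b$ and hence never terminates; you should rely on your other option of clearing denominators and invoking Lemma \ref{sec4-lm3}, which is precisely what Lemma \ref{sec4-lm2} does.
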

\begin{proof}
Suppose that $\b=\sum_{i=1}^m\frac{q_i}{p}\a_i$, where $p=\sum_{i=1}^mq_i$, $p,q_i\in\N^*$, $(p,q_1,\ldots,q_m)=1$. We can write $$\b=\frac{q_1}{p}\a_1+\frac{p-q_1}{p}(\frac{q_2}{p-q_1}\a_2+\cdots+\frac{q_m}{p-q_1}\a_m).$$
Let $\b_{1}=\frac{q_2}{p-q_1}\a_2+\cdots+\frac{q_m}{p-q_1}\a_m$. Then $\b=\frac{q_1}{p}\a_1+\frac{p-q_1}{p}\b_{1}$. Apply the same procedure for $\b_1$, and continue like this. Eventually we obtain a set of points $\{\b_i\}_{i=0}^{m-2}$ (set $\b_0=\b$) such that $\b_i=\lambda_i\a_{i+1}+\mu_i\b_{i+1}$, $i=0,\ldots,m-3$ and $\b_{m-2}=\lambda_{m-2}\a_{m-1}+\mu_{m-2}\a_m$, where $\lambda_i+\mu_i=1,\lambda_i,\mu_i>0$, $i=0,\ldots,m-2$. For $\b_i=\lambda_i\a_{i+1}+\mu_i\b_{i+1}$ (resp. $\b_{m-2}=\lambda_{m-2}\a_{m-1}+\mu_{m-2}\a_m$), let $M_i$ be the $\{\a_{i+1},\b_{i+1}\}$- (resp. $\{\a_{m-1},\a_{m}\}$-) rational mediated set containing $\b_i$ obtained by Lemma \ref{sec4-lm2}, $i=0,\ldots,m-2$. Let $M_{\A\b}=\cup_{i=0}^{m-2}M_i$. Then clearly $M_{\A\b}$ is an $\A$-rational mediated set containing $\b$.
\end{proof}

\begin{example}\label{ex2}
Let $f=x^4y^2+x^2y^4+1-3x^2y^2$ be the Motzkin's polynomial and $\A=\{\a_1=(4,2),\a_2=(2,4),\a_3=(0,0)\}$, $\b=(2,2)$. Then $\b=\frac{1}{3}\a_1+\frac{1}{3}\a_2+\frac{1}{3}\a_3=\frac{1}{3}\a_1+\frac{2}{3}(\frac{1}{2}\a_2+\frac{1}{2}\a_3)$. Let $\b_1=\frac{1}{2}\a_2+\frac{1}{2}\a_3$ such that  $\b=\frac{1}{3}\a_1+\frac{2}{3}\b_1$. Let $\b_2=\frac{2}{3}\a_1+\frac{1}{3}\b_1$. Then it is easy to check that $M=\{\a_1,\a_2,\a_3,\b,\b_1,\b_2\}$ is an $\A$-rational mediated set containing $\b$.
\begin{center}
\begin{tikzpicture}
\draw (0,0)--(1.5,3);
\draw (0,0)--(3,1.5);
\draw (1.5,3)--(3,1.5);
\draw (0.75,1.5)--(3,1.5);
\fill (0,0) circle (2pt);
\node[above left] (1) at (0,0) {$(0,0)$};
\node[below left] (1) at (0,0) {$\a_3$};
\fill (1.5,3) circle (2pt);
\node[above right] (2) at (1.5,3) {$(2,4)$};
\node[above left] (2) at (1.5,3) {$\a_2$};
\fill (3,1.5) circle (2pt);
\node[above right] (3) at (3,1.5) {$(4,2)$};
\node[below right] (3) at (3,1.5) {$\a_1$};
\fill (1.5,1.5) circle (2pt);
\node[above] (4) at (1.5,1.5) {$(2,2)$};
\node[below] (4) at (1.5,1.5) {$\b$};
\fill (0.75,1.5) circle (2pt);
\node[above left] (5) at (0.75,1.5) {$(1,2)$};
\node[below left] (5) at (0.75,1.5) {$\b_1$};
\fill (2.25,1.5) circle (2pt);
\node[above] (6) at (2.25,1.5) {$(3,2)$};
\node[below] (6) at (2.25,1.5) {$\b_2$};
\end{tikzpicture}
\end{center}
By a simple computation, we have $f=(1-xy^2)^2+2(x^{\frac{1}{2}}y-x^{\frac{3}{2}}y)^2+(xy-x^2y)^2$. Here we represent $f$ as a sum of three binomial squares with rational exponents.
\end{example}

For a polynomial $f=\sum_{\a\in\Lambda(f)}c_{\a}\x^{\a}-\sum_{\b\in\Gamma(f)}d_{\b}\x^{\b}\in\R[\x]$, we associate $f$ with the PN-polynomial $\tilde{f}=\sum_{\a\in\Lambda(f)}c_{\a}\x^{\a}-\sum_{\b\in\Gamma(f)}|d_{\b}|\x^{\b}$.
\begin{lemma}\label{sec4-lm1}
Suppose $f=\sum_{\a\in\Lambda(f)}c_{\a}\x^{\a}-\sum_{\b\in\Gamma(f)}d_{\b}\x^{\b}\in\R[\x]$. If $\tilde{f}$ is nonnegative, then $f$ is nonnegative. Moreover, $\tilde{f}\in\SONC$ if and only if $f\in\SONC$.
\end{lemma}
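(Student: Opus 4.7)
The first statement is a pointwise comparison. For any $\x\in\R^n$, using $|\x^\b|=|\x|^\b$ (componentwise absolute value) and $\Lambda(f)\subseteq(2\N)^n$, we have $c_\a\x^\a=c_\a|\x|^\a$ for every $\a\in\Lambda(f)$ and $-d_\b\x^\b\ge -|d_\b||\x|^\b$ for every $\b\in\Gamma(f)$. Summing these bounds yields $f(\x)\ge\tilde f(|\x|)$, so if $\tilde f\ge 0$ on $\R^n$ then so is $f$.

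For the SONC equivalence, the driving observation is the circuit nonnegativity criterion recalled in Section~2: a circuit polynomial $\sum_\a c_\a\x^\a-d\x^\b$ with $\b\notin(2\N)^n$ is nonnegative iff $|d|\le\Theta$, so its nonnegativity is insensitive to the sign of $d$. In contrast, for $\b\in\Gamma(f)\cap(2\N)^n$ the coefficient $-d_\b$ in $f$ is already non-positive by the very definition of $\Gamma(f)$, hence $d_\b=|d_\b|$ and $f,\tilde f$ already have identical coefficients at $\x^\b$.

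To prove $f\in\SONC\Rightarrow\tilde f\in\SONC$, the plan is to start from the structural SONC decomposition~\eqref{sec2-eq1},
$f=\sum_{\b\in\Gamma(f)}\sum_{\Delta\in\mathscr{F}(\b)}f_{\b\Delta}+\sum_{\a\in\tilde{\A}}c_\a\x^\a,$
in which every circuit summand $f_{\b\Delta}$ has its inner term at the designated exponent $\b$. For each $\b\in\Gamma(f)\setminus(2\N)^n$ with $d_\b<0$, I would flip the sign of the inner coefficient of every $f_{\b\Delta}$ (at that fixed $\b$) simultaneously. By the observation above, each modified circuit polynomial stays nonnegative, and collecting terms at $\x^\b$ turns the net coefficient from $-d_\b$ into $-|d_\b|$, exactly the coefficient of $\x^\b$ in $\tilde f$. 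For every other $\b\in\Gamma(f)$ no change is needed. The result is a SONC decomposition of $\tilde f$. The converse $\tilde f\in\SONC\Rightarrow f\in\SONC$ is produced by the same recipe applied to a structural SONC decomposition of $\tilde f$, flipping signs whenever $d_\b<0$.

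The only subtle point is coordinating the sign flips so that the net coefficient at each $\x^\b$ comes out right; this is exactly why~\eqref{sec2-eq1} is invoked, as it groups together all circuit summands sharing a given inner exponent, making the simultaneous flip clean and the matching of total coefficients automatic.
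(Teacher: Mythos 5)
Your proposal is correct and follows essentially the same route as the paper: the pointwise bound $f(\x)\ge\tilde f(|\x|)$ for the first claim, and for the second, the grouped decomposition \eqref{sec2-eq1} combined with the sign-insensitivity of the circuit nonnegativity criterion at non-even inner exponents, flipping the inner coefficients of all circuits attached to each $\b\notin(2\N)^n$ with $d_\b<0$ (the paper's set $\B$). The only cosmetic difference is that the paper writes out the direction $\tilde f\in\SONC\Rightarrow f\in\SONC$ explicitly and leaves the converse as "similar," whereas you do the reverse.
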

\begin{proof}
For any $\x\in\R^n$, we have
\begin{align*}
f(\x)&=\sum_{\a\in\Lambda(f)}c_{\a}\x^{\a}-\sum_{\b\in\Gamma(f)}d_{\b}\x^{\b}\\
&\ge\sum_{\a\in\Lambda(f)}c_{\a}|\x|^{\a}-\sum_{\b\in\Gamma(f)}|d_{\b}||\x|^{\b}\\
&=\tilde{f}(|\x|),
\end{align*}
where $|\x|=(|x_1|,\ldots,|x_n|)$. It follows that the nonnegativity of $\tilde{f}$ implies the nonnegativity of $f$.

For every $\b\in\Gamma(f)$, let $\mathscr{F}(\b)$ be as in \eqref{fb}. Let $\B=\{\b\in\Gamma(f)\mid\b\notin(2\N)^n\textrm{ and }d_{\b}<0\}$ and  $\tilde{\A}=\{\a\in\Lambda(f)\mid\a\notin\cup_{\b\in\Gamma(f)}\cup_{\Delta\in\mathscr{F}(\b)}V(\Delta)\}$. Assume $\tilde{f}\in\SONC$. Then we can write
\begin{align*}
\tilde{f}=&\sum_{\b\in\Gamma(f)\backslash\B}\sum_{\Delta\in\mathscr{F}(\b)}(\sum_{\a\in V(\Delta)}c_{\b\Delta\a}\x^{\a}-d_{\b\Delta}\x^{\b})\\
&+\sum_{\b\in\B}\sum_{\Delta\in\mathscr{F}(\b)}(\sum_{\a\in V(\Delta)}c_{\b\Delta\a}\x^{\a}-\tilde{d}_{\b\Delta}\x^{\b})+\sum_{\a\in\tilde{\A}}c_{\a}\x^{\a}
\end{align*}
such that each $\sum_{\a\in V(\Delta)}c_{\b\Delta\a}\x^{\a}-d_{\b\Delta}\x^{\b}$ and each $\sum_{\a\in V(\Delta)}c_{\b\Delta\a}\x^{\a}-\tilde{d}_{\b\Delta}\x^{\b}$ are nonnegative circuit polynomials. Note that $\sum_{\a\in V(\Delta)}c_{\b\Delta\a}\x^{\a}+\tilde{d}_{\b\Delta}\x^{\b}$ is also a nonnegative circuit polynomial and $\sum_{\Delta\in\Delta(\b)}\tilde{d}_{\b\Delta}=|d_{\b}|=-d_{\b}$ for any $\b\in\B$. Hence,
\begin{align*}
f=&\sum_{\b\in\Gamma(f)\backslash\B}\sum_{\Delta\in\mathscr{F}(\b)}(\sum_{\a\in V(\Delta)}c_{\b\Delta\a}\x^{\a}-d_{\b\Delta}\x^{\b})\\
&+\sum_{\b\in\B}\sum_{\Delta\in\mathscr{F}(\b)}(\sum_{\a\in V(\Delta)}c_{\b\Delta\a}\x^{\a}+\tilde{d}_{\b\Delta}\x^{\b})+\sum_{\a\in\tilde{\A}}c_{\a}\x^{\a}\in\SONC.
\end{align*}

The inverse follows similarly.
\end{proof}

Hence by Lemma \ref{sec4-lm1}, if we replace the polynomial $f$ in (\ref{sec3-eq5}) by its associated PN-polynomial $\tilde{f}$, then this does not affect the optimal value of (\ref{sec3-eq5}):
\begin{equation}\label{sec3-eq6}
(\textrm{SONC-PN}):\quad\begin{cases}
\sup&\xi\\
\textrm{s.t.}&\tilde{f}(\x)-\xi\in\SONC.
\end{cases}
\end{equation}

\begin{remark}
Lemma \ref{sec4-lm1} actually tells us that the SONC formulation for the polynomial optimization problem \eqref{sec3-eq5} always provides the optimal value of the corresponding PN-polynomial. This may greatly affect the quality of the lower bounds obtained via SONC decompositions, as we shall see in Sec.~\ref{quality}.
\end{remark}

\subsection{Compute a simplex cover}
Given a polynomial $f=\sum_{\a\in\Lambda(f)}c_{\a}\x^{\a}-\sum_{\b\in\Gamma(f)}d_{\b}\x^{\b}\in\R[\x]$, in order to obtain a SONC decomposition of $f$, we use all simplices containing $\b$ for each $\b\in\Gamma(f)$ in Theorem \ref{sec3-thm5}. In practice, we do not need that many simplices. Actually, by Carath\'eodory's theorem (\cite[Corollary 17.1.2]{rock}), we can write every SONC polynomial $f$ as a sum of at most $\#\supp(f)$
nonnegative circuit polynomials.

\begin{example}
Let $f=50x^4y^4+x^4+3y^4+800-100xy^2-100x^2y$. Let $\a_1=(0,0),\a_2=(4,0),\a_3=(0,4),\a_4=(4,4)$ and $\b_1=(2,1),\b_2=(1,2)$. 
There are two simplices which cover $\b_1$: one with vertices $\{\a_1,\a_2\,\a_3\}$, denoted by $\Delta_1$, and one with vertices $\{\a_1,\a_2\,\a_4\}$, denoted by $\Delta_2$. 
There are two simplices which cover $\b_2$: $\Delta_1$ and one with vertices $\{\a_1,\a_3\,\a_4\}$, denoted by $\Delta_3$. One can check that $f$ admits a SONC decomposition $f=g_1+g_2$, where $g_1=20x^4y^4+x^4+400-100x^2y$, supported on $\Delta_2$, and $g_2=30x^4y^4+3y^4+400-100xy^2$, supported on $\Delta_3$, are both nonnegative circuit polynomials. Hence the simplex $\Delta_1$ is not needed in this SONC decomposition of $f$.
\begin{center}
\begin{tikzpicture}
\draw (0,0)--(0,2);
\draw (0,0)--(2,0);
\draw (2,0)--(2,2);
\draw (0,2)--(2,2);
\draw (0,0)--(2,2);
\draw (0,2)--(2,0);
\fill[fill=green,fill opacity=0.3] (0,0)--(2,0)--(2,2)--(0,0);
\fill[fill=blue,fill opacity=0.3] (0,0)--(0,2)--(2,2)--(0,0);
\fill (0,0) circle (2pt);
\node[below left] (1) at (0,0) {$\a_1$};
\fill (2,0) circle (2pt);
\node[below right] (2) at (2,0) {$\a_2$};
\fill (0,2) circle (2pt);
\node[above left] (3) at (0,2) {$\a_3$};
\fill (2,2) circle (2pt);
\node[above right] (4) at (2,2) {$\a_4$};
\fill (1,0.5) circle (2pt);
\node[right] (5) at (1,0.5) {$\b_1$};
\fill (0.5,1) circle (2pt);
\node[above] (6) at (0.5,1) {$\b_2$};
\end{tikzpicture}
\end{center}
\end{example}

Therefore we need to compute a set of simplices with vertices coming from $\Lambda(f)$ and that covers $\Gamma(f)$.  

For $\b\in\Gamma(f)$ and $\a_0\in\Lambda(f)$, define the following auxiliary linear program:
\begin{alignat*}{3}
&\textrm{SimSel}(\b,\Lambda(f),\a_0)=\,&\textrm{Argmax}&\quad&&\lambda_{\a_0}\\
&&\textrm{s.t.}&&&\sum_{\a\in\Lambda(f)}\lambda_{\a}\cdot\a=\b\\
&&&&&\sum_{\a\in\Lambda(f)}\lambda_{\a}=1\\
&&&&&\lambda_{\a}\ge0, \forall\a\in\Lambda(f).
\end{alignat*}

Following \cite{se}, we can ensure that the output of $\textrm{SimSel}(\b,\Lambda(f),\a_0)$ corresponds to a trellis which contains $\a_0$ and covers $\b$. 
We rely on the algorithm {\tt SimplexCover} to compute a simplex cover.

\bigskip

Let $\mathbf{K}$ be the $3$-dimensional rotated second-order cone, i.e.,
\begin{equation}
\mathbf{K}:=\{(a,b,c)\in\R^3\mid2ab\ge c^2,a\ge0,b\ge0\}.
\end{equation}

Suppose $\tilde{f}=\sum_{\a\in\Lambda(f)}c_{\a}\x^{\a}-\sum_{\b\in\Gamma(f)}d_{\b}\x^{\b}\in\R[\x]$. 
By Algorithm {\tt SimplexCover}, we compute a simplex cover $\{(\A_k,\b_k)\}_{k=1}^l$. 
For each $k$, let $M_{k}$ be an $\A_{k}$-rational mediated set containing $\b_k$ and $s_k=\#M_k\backslash\A_{k}$. 
For each $\bu_i^k\in M_k\backslash\A_{k}$, let us write  $\bu_i^k=\frac{1}{2}(\bv_i^k+\bw_i^k)$. 
Let $\tilde{\A}=\{\a\in\Lambda(f)\mid\a\notin\cup_{\b\in\Gamma(f)}\cup_{\Delta\in\mathscr{F}(\b)}V(\Delta)\}$. 
Then we can relax (SONC-PN) to an SOCP problem (SONC-SOCP) as follows:
\begin{equation}\label{sec3-eq7}
\begin{cases}
\sup&\xi\\
\textrm{s.t.}&\tilde{f}(\x)-\xi=\sum_{k=1}^l\sum_{i=1}^{s_k}(2a_i^k\x^{\bv_i^k}+b_i^k\x^{\bw_i^k}-2c_i^k\x^{\bu_i^k})+\sum_{\a\in\tilde{\A}}c_{\a}\x^{\a},\\
&(a_i^k,b_i^k,c_i^k)\in\mathbf{K},\quad\forall i,k.
\end{cases}
\end{equation}
Let us denote by $\xi_{socp}$ the optimal value of (\ref{sec3-eq7}). Then, we have $\xi_{socp}\le\xi_{sonc}\le\xi^*$.

\if{
\begin{algorithm}
\renewcommand{\algorithmicrequire}{\textbf{Input:}}
\renewcommand{\algorithmicensure}{\textbf{Output:}}
\newcommand{\roundfun}[2]{\texttt{round}(#1,#2)}
\caption{${\tt RoundProject}(f)$}\label{roundproject}
\begin{algorithmic}[1]
\REQUIRE
A SONC polynomial $f=\sum_{\a\in\Lambda(f)}c_{\a}\x^{\a}-\sum_{\b\in\Gamma(f)}d_{\b}\x^{\b}$, a rounding precision $\delta_i \in \N$, a precision parameter $\delta$ for the SOCP solver
\ENSURE
An exact SONC decomposition of $f$
\STATE $\{(\A_k,\b_k)\}_{k=1}^l:={\tt SimplexCover}(\Lambda(f),\Gamma(f))$;
\FOR {$k=1:l$}
\STATE $\{(\bu_i^k,\bv_i^k,\bw_i^k)\}_{i=1}^{s_k}:={\tt MedSet}(\A_k,\b_k)$;
\ENDFOR
\STATE $B:=\cup_{k=1}^l\cup_{i=1}^{s_k}\{\bv_i^k,\bw_i^k\}$;
\STATE Compute the matrix $\tilde Q$ by solving (\textrm{SONC-SOCP}) with $\tilde Q_i^k=\begin{pmatrix}2\tilde{a}_i^k&-\tilde{c}_i^k\\-\tilde{c}_i^k&\tilde{b}_i^k\end{pmatrix}$ at precision $\delta$;
\STATE $Q':= \roundfun{\tilde{Q}}{\delta_i}$; \COMMENT{rounding step}
\FOR {$\alpha, \beta \in B$}
\STATE $\eta(\alpha+\beta) := \# \{(\alpha',\beta') \in B^2 \mid \alpha'+\beta' = \alpha + \beta \}$;
\STATE $Q(\alpha,\beta) := Q'(\alpha,\beta) - \frac{1}{\eta(\alpha+\beta)} \Bigl( \sum_{\alpha'+\beta'=\alpha + \beta} Q'(\alpha',\beta')-g_{\frac{\alpha+\beta}{2}} \Bigr)$; \COMMENT{projection step}
\ENDFOR
\FOR {$k=1:l$}
\STATE $g_k:=\cup_{i=1}^{s_k}(2a^k_i\x^{\bu_i^k}+b_i^k\x^{\bv_i^k}-2c_i^k\x^{\bw_i^k})$;
\ENDFOR
\STATE \textbf{return} $\cup_{k=1}^lg_k$;
\end{algorithmic}
\end{algorithm}
}\fi

\subsection{Discussion on the quality of SONC lower bounds}\label{quality}
The quality of obtained SONC lower bounds depends on two successive steps: the relaxation to the corresponding PN-polynomial (from $\xi^*$ to $\xi_{sonc}$) and the relaxation to a specific simplex cover (from $\xi_{sonc}$ to $\xi_{socp}$). 
The loss of bound-quality at the second step can be improved by choosing a more optimal simplex cover. Nevertheless, it may happen that the loss of bound-quality at the first step is already big. Let us see an example.

\begin{example}
Let $f=1+x_1^4+x_2^4-x_1x_2^2-x_1^2x_2+5x_1x_2$. Since $\Lambda(f)$ forms a trellis, the simplex cover for $f$ is unique. One obtains  $\xi_{socp}=\xi_{sonc}\approx-6.916501$ while $\xi^*\approx-2.203372$. 
Hence the relative optimality gap is near $214\%$.
\end{example}

The above example indicates that the gap between nonnegative polynomials and SONC PN-polynomials (see figure \ref{fig-poly}) may greatly affect the quality of SONC lower bounds.

\begin{figure}[htbp]
\begin{center}
\begin{tikzpicture} 
\node at (-3.2,0) {{\footnotesize PSD polynomials}};
\node at (3.2,0) {{\footnotesize PN-polynomials}};
\node at (-0.6,0) {{\footnotesize SONC polynomials}};
\draw[color=blue] (-1.7,0) ellipse (3 and 2);
\draw[color=red] (1.7,0) ellipse (3 and 2);
\draw[color=green] (-0.6,0) circle (1.4);
\end{tikzpicture}
\end{center}
\caption{Relationship of different classes of polynomials}\label{fig-poly}
\end{figure}
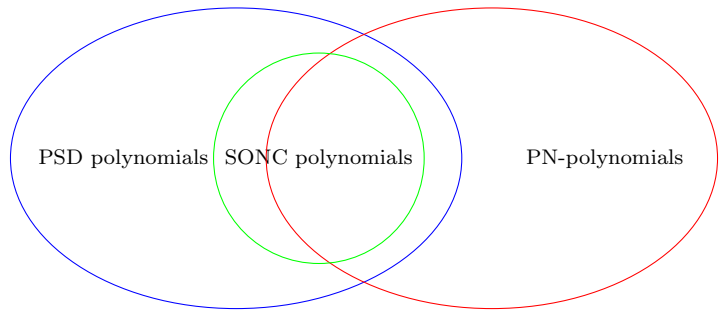

\section{Numerical experiments}
In this section, we present numerical results of the proposed algorithms for unconstrained polynomial optimization problems. 
Our tool, called {\tt SONCSOCP}, implements the simplex cover algorithm as well as the rational mediated set algorithm~\ref{alg3} ${\tt MedSet}$ and computes the optimal value $\xi_{socp}$ of the SOCP program~\eqref{sec3-eq7} with Mosek \cite{mosek}. 
All numerical experiments were performed on an Intel Core i5-8265U@1.60GHz CPU with 8GB RAM memory and the WINDOWS 10 system.
Our {\tt SONCSOCP} tool can be downloaded at  \href{https://github.com/wangjie212/SONCSOCP.}{github:{\tt SONCSOCP}}.

Our benchmarks are issued from the database of randomly generated polynomials provided by Seidler and de Wolff in \cite{se}. 
Depending on the Newton polytope, these benchmarks are divided into three classes: the ones with standard simplices, the ones with general simplices and the ones with arbitrary Newton polytopes (see \cite{se} for the details on the construction of these polynomials). We compare the performance of {\tt SONCSOCP} with the ones of {\tt POEM}, which relies on the ECOS solver to solve geometric programs (see \cite{se} for more details).

To measure the quality of a given lower bound $\xi_{lb}$, we rely on the `{\tt local\_min}' function available in {\tt POEM} which computes an upper bound $\xi_{min}$ on the minimum of a  polynomial. The relative optimality gap is defined by 
$\frac{|\xi_{min}-\xi_{lb}|}{|\xi_{min}|}.$

\begin{table}[htbp]
\caption{The notation}\label{table1}
\begin{center}
\begin{tabular}{|c|c|}
\hline
$n$&the number of variables\\
\hline
$d$&the degree\\
\hline
$t$&the number of terms\\
\hline
$l$& the lower bound on the number of inner terms\\
\hline
opt&the optimum\\
\hline
time&the running time in seconds\\
\hline
\end{tabular}
\end{center}
\end{table}

\subsection{Standard simplex}\label{standard}
For the standard simplex case, we take $10$ polynomials of different types (labeled by $N$). Running time and lower bounds obtained with {\tt SONCSOCP} and {\tt POEM} are displayed in Table \ref{tb2:standard}. Note that for polynomials with $\Lambda(\cdot)$ forming a trellis, the simplex cover is unique. Thus the SONC lower bounds obtained by {\tt SONCSOCP} and {\tt POEM} are the same theoretically, which is also reflected in Table \ref{tb2:standard}. 
For each polynomial, the relative optimality gap is less than $1\%$ and for $8$ out of $10$ polynomials, it is less than $0.1\%$ (see Figure \ref{fg2:standard}).

\begin{table}[htbp]
\caption{Results for the standard simplex case}\label{tb2:standard}
\begin{center}
\begin{tabular}{c|c|cccccccccc}
\multicolumn{2}{c|}{$N$}&$1$&$2$&$3$&$4$&$5$&$6$&$7$&$8$&$9$&$10$\\
\hline
\multicolumn{2}{c|}{$n$}&$10$&$10$&$10$&$20$&$20$&$20$&$30$&$30$&$40$&$40$\\
\multicolumn{2}{c|}{$d$}&$40$&$50$&$60$&$40$&$50$&$60$&$50$&$60$&$50$&$60$\\
\multicolumn{2}{c|}{$t$}&$20$&$20$&$20$&$30$&$30$&$30$&$50$&$50$&$100$&$100$\\
\hline
\multirow{2}*{time}&{\tt SONCSOCP}&$0.04$&$0.04$&$0.04$&$0.14$&$0.14$&$ 0.13$&$0.43$&$0.40$&$2.23$&$2.21$\\
\cline{3-12}
&{\tt POEM}&$0.26$&$0.27$&$0.26$&$0.43$&$0.44$&$0.42$&$1.78$&$1.79$&$2.20$&$2.25$\\
\hline
\multirow{2}*{opt}&{\tt SONCSOCP}&$3.52$&$3.52$&$3.52$&$2.64$&$2.64$&$2.64$&$2.94$&$2.94$&$4.41$&$4.41$\\
\cline{3-12}
&{\tt POEM}&$3.52$&$3.52$&$3.52$&$2.64$&$2.64$&$2.64$&$2.94$&$2.94$&$4.41$&$4.41$\\
\hline
\end{tabular}
\end{center}
\end{table}

\begin{figure}[htbp]
\begin{center}
\begin{tikzpicture}
\begin{axis}[xlabel={$N$},
ylabel={running time (s)},
legend pos=north west]
\addplot[color=blue,mark=o]
coordinates{(1,0.04)(2,0.04)(3,0.04)(4,0.14)(5,0.14)(6,0.13)(7,0.43)(8,0.40)(9,2.23)(10,2.21)};
\addlegendentry{SONCSOCP}
\addplot[color=red,mark=star]
coordinates{(1,0.26)(2,0.27)(3,0.26)(4,0.43)(5,0.44)(6,0.42)(7,1.78)(8,1.79)(9,2.20)(10,2.25)};
\addlegendentry{POEM}
\end{axis}
\end{tikzpicture}
\end{center}
\caption{Running time for the standard simplex case}\label{fg1:standard}
\end{figure}
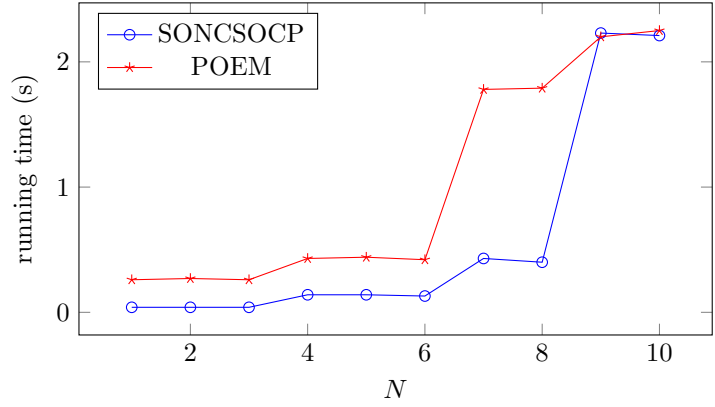

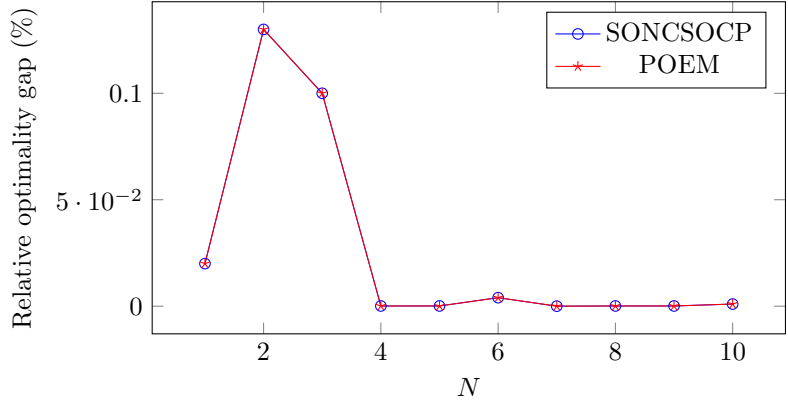
\begin{figure}[htbp]
\begin{center}
\begin{tikzpicture}
\begin{axis}[xlabel={$N$},
ylabel={Relative optimality gap (\%)},
legend pos=north east]
\addplot[color=blue,mark=o]
coordinates{(1,0.02)(2,0.13)(3,0.10)(4,0.0001)(5,0.0001)(6,0.004)(7,0.00001)(8,0.0001)(9,0.0001)(10,0.001)};
\addlegendentry{SONCSOCP}
\addplot[color=red,mark=star]
coordinates{(1,0.02)(2,0.13)(3,0.10)(4,0.0001)(5,0.0001)(6,0.004)(7,0.00001)(8,0.0001)(9,0.0001)(10,0.001)};
\addlegendentry{POEM}
\end{axis}
\end{tikzpicture}
\end{center}
\caption{Relative optimality gap for the standard simplex case}\label{fg2:standard}
\end{figure}

\subsection{General simplex}
For the general simplex case, we take $10$ polynomials of different types (labeled by $N$). 
Running time and lower bounds obtained with {\tt SONCSOCP} and {\tt POEM} are displayed in Table \ref{tb1:simplex}. 
As in Sec.~\ref{standard}, the SONC lower bounds obtained by {\tt SONCSOCP} and {\tt POEM} are the same. For each polynomial except for the one corresponding to $N = 7$, the relative optimality gap is within $30\%$, and for $6$ out of $10$ polynomials, the gap is below $1\%$ (see Figure \ref{fg2:simplex}). {\tt POEM} fails to obtain a lower bound for the instance $N = 10$ by returning $-$Inf.

 Figure \ref{fg1:simplex} shows that, overall, the running times of {\tt SONCSOCP} and {\tt POEM} are close. 
 {\tt SONCSOCP} is faster than {\tt POEM} for the instance $N = 6$, possibly because better  performance are obtained when the degree is relatively low.

\begin{table}[htbp]
\caption{Results for the general simplex case}\label{tb1:simplex}
\begin{center}
\begin{tabular}{c|c|cccccccccc}
\multicolumn{2}{c|}{$N$}&$1$&$2$&$3$&$4$&$5$&$6$&$7$&$8$&$9$&$10$\\
\hline
\multicolumn{2}{c|}{$n$}&$10$&$10$&$10$&$10$&$10$&$10$&$10$&$10$&$10$&$10$\\
\multicolumn{2}{c|}{$d$}&$20$&$30$&$40$&$50$&$60$&$20$&$30$&$40$&$50$&$60$\\
\multicolumn{2}{c|}{$t$}&$20$&$20$&$20$&$20$&$20$&$30$&$30$&$30$&$30$&$30$\\
\hline
\multirow{2}*{time}&{\tt SONCSOCP}&$0.32$&$0.29$&$0.36$&$0.48$&$0.54$&$ 0.56$&$0.73$&$0.88$&$1.04$&$1.04$\\
\cline{2-12}
&{\tt POEM}&$0.28$&$0.31$&$0.31$&$0.31$&$0.43$&$0.74$&$0.75$&$0.74$&$0.72$&$0.76$\\
\hline
\multirow{2}*{opt}&{\tt SONCSOCP}&$1.18$&$0.22$&$0.38$&$0.90$&$0.06$&$4.00$&$-4.64$&$1.62$&$2.95$&$5.40$\\
\cline{2-12}
&{\tt POEM}&$1.18$&$0.22$&$0.38$&$0.90$&$0.06$&$4.00$&$-4.64$&$1.62$&$2.95$&$-$Inf\\
\end{tabular}
\end{center}
\end{table}

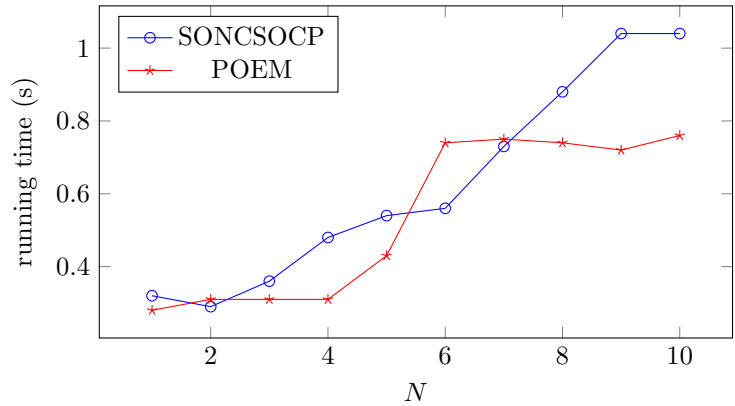
\begin{figure}[htbp]
\begin{center}
\begin{tikzpicture}
\begin{axis}[xlabel={$N$},
ylabel={running time (s)},
legend pos=north west]
\addplot[color=blue,mark=o]
coordinates{(1,0.32)(2,0.29)(3,0.36)(4,0.48)(5,0.54)(6,0.56)(7,0.73)(8,0.88)(9,1.04)(10,1.04)};
\addlegendentry{SONCSOCP}
\addplot[color=red,mark=star]
coordinates{(1,0.28)(2,0.31)(3,0.31)(4,0.31)(5,0.43)(6,0.74)(7,0.75)(8,0.74)(9,0.72)(10,0.76)};
\addlegendentry{POEM}
\end{axis}
\end{tikzpicture}
\end{center}
\caption{Running time for the general simplex case}\label{fg1:simplex}
\end{figure}

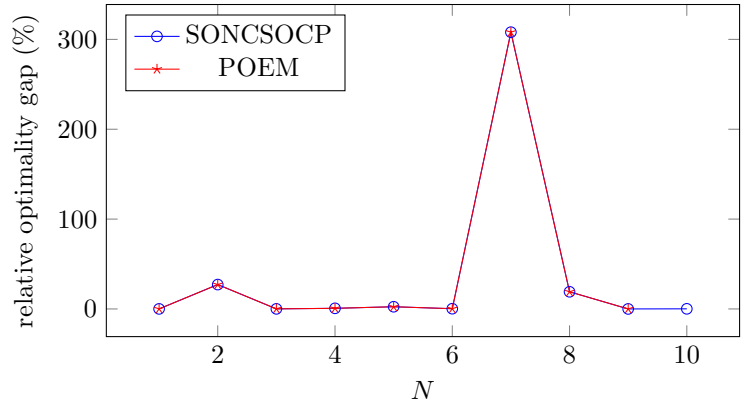
\begin{figure}[htbp]
\begin{center}
\begin{tikzpicture}
\begin{axis}[xlabel={$N$},
ylabel={relative optimality gap (\%)},
legend pos=north west]
\addplot[color=blue,mark=o]
coordinates{(1,0.069)(2,27.13)(3,0.102)(4,0.708)(5,2.46)(6,0.27)(7,308)(8,19.11)(9,0.016)(10,0.143)};
\addlegendentry{SONCSOCP}
\addplot[color=red,mark=star]
coordinates{(1,0.069)(2,27.13)(3,0.102)(4,0.708)(5,2.46)(6,0.27)(7,308)(8,19.11)(9,0.016)};
\addlegendentry{POEM}
\end{axis}
\end{tikzpicture}
\end{center}
\caption{Relative optimality gap for the general simplex case}\label{fg2:simplex}
\end{figure}

\subsection{Arbitrary polytope}
For the arbitrary polytope case, we take $20$ polynomials of different types (labeled by $N$). 
With regard to these examples, {\tt POEM} always throws an error ``expected square matrix''. 
Running time and lower bounds obtained with {\tt SONCSOCP} are displayed in Table \ref{tb1:arbitrary}. For each polynomial, the relative optimality gap is within $25\%$ and for $17$ out of $20$ polynomials, the gap is within $1\%$ (see Figure \ref{fg2:arbitrary}).

\begin{table}[htbp]
\caption{Results for the arbitrary polytope case}\label{tb1:arbitrary}
\begin{center}
\begin{tabular}{c|c|cccccccccc}
\multicolumn{2}{c|}{$N$}&$1$&$2$&$3$&$4$&$5$&$6$&$7$&$8$&$9$&$10$\\
\hline
\multicolumn{2}{c|}{$n$}&$10$&$10$&$10$&$10$&$10$&$10$&$10$&$10$&$10$&$10$\\
\multicolumn{2}{c|}{$d$}&$20$&$20$&$20$&$30$&$30$&$30$&$40$&$40$&$40$&$50$\\
\multicolumn{2}{c|}{$t$}&$30$&$100$&$300$&$30$&$100$&$300$&$30$&$100$&$300$&$30$\\
\multicolumn{2}{c|}{$l$}&$15$&$71$&$231$&$15$&$71$&$231$&$15$&$71$&$231$&$15$\\
\hline
\multirow{2}*{{\tt SONCSOCP}}&time&$0.38$&$1.75$&$6.86$&$0.64$&$3.13$&$ 11.3$&$0.72$&$4.01$&$14.6$&$0.76$\\
\cline{2-12}
&opt&$0.70$&$3.32$&$31.7$&$3.31$&$15.3$&$3.31$&$0.47$&$5.42$&$38.7$&$1.56$\\
\hline
\hline
\multicolumn{2}{c|}{$N$}&$11$&$12$&$13$&$14$&$15$&$16$&$17$&$18$&$19$&$20$\\
\hline
\multicolumn{2}{c|}{$n$}&$10$&$10$&$10$&$10$&$10$&$20$&$20$&$20$&$20$&$20$\\
\multicolumn{2}{c|}{$d$}&$50$&$50$&$60$&$60$&$60$&$30$&$30$&$40$&$40$&$40$\\
\multicolumn{2}{c|}{$t$}&$100$&$300$&$30$&$100$&$300$&$50$&$100$&$50$&$100$&$200$\\
\multicolumn{2}{c|}{$l$}&$71$&$231$&$15$&$71$&$231$&$5$&$15$&$5$&$15$&$35$\\
\hline
\multirow{2}*{{\tt SONCSOCP}}&time&$4.41$&$16.8$&$1.84$&$11.2$&$42.4$&$ 3.20$&$8.84$&$2.60$&$10.5$&$38.7$\\
\cline{2-12}
&opt&$0.20$&$7.00$&$3.31$&$2.52$&$23.4$&$0.70$&$4.91$&$4.13$&$2.81$&$9.97$\\
\hline
\end{tabular}
\end{center}
\end{table}

\begin{figure}[htbp]
\begin{center}
\begin{tikzpicture}
\begin{axis}[xlabel={$N$},
ylabel={relative optimality gap (\%)},
legend pos=north west]
\addplot[color=blue,mark=o]
coordinates{(1,1.56)(2,0.0015)(3,0.0006)(4,0.02)(5,0.004)(6,0.002)(7,0.25)(8,0.01)(9,0.0008)(10,0.026)(11,0.023)(12,0.008)(13,0.02)(14,0.07)(15,0.04)(16,0.01)(17,0.43)(18,0.0002)(19,22.9)(20,24.8)};
\addlegendentry{SONCSOCP}
\end{axis}
\end{tikzpicture}
\end{center}
\caption{Relative optimality gap for the arbitrary polytope case}\label{fg2:arbitrary}
\end{figure}
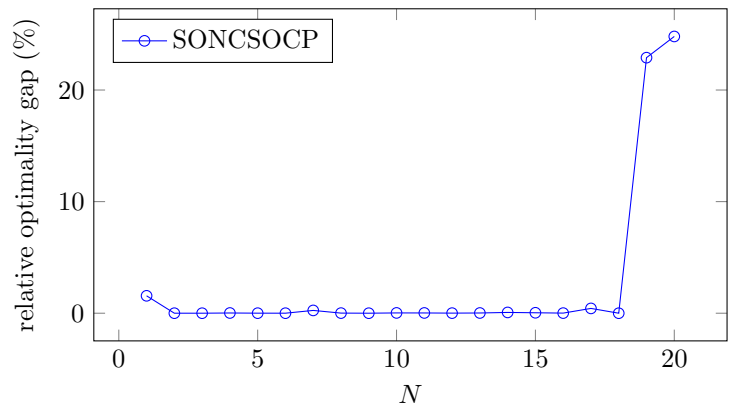

\section{Conclusions}
In this paper, we provide a constructive proof that each SONC cone admits a second-order cone representation. 
Based on this, we propose an algorithm to compute a lower bound for unconstrained polynomial optimization problems via second-order cone programming. 
Numerical experiments demonstrate the efficiency of our algorithm even when the number of variables and the degree are fairly large. 
Even though the complexity of our algorithm depends on the degree in theory, 
it turns out that this dependency is rather mild. 
It happens that for all numerical examples tested in this paper, the running time is below one minute even for polynomials of degree up to $60$.

Since the running time is satisfactory, the main concern of SONC-based algorithms for sparse polynomial optimization may be the quality of obtained lower bounds. 
For many examples tested in this paper, the relative optimality gap is within $1\%$. 
However, it can happen that the SONC lower bound is not accurate and this cannot be avoided by choosing an optimal simplex cover. 
In order to improve the quality of such bounds, it is mandatory to find more complex representations of nonnegative polynomials, which involve SONC polynomials. 
We leave it as a future work.
We also plan to design a rounding-projection procedure, in the spirit of \cite{pe}, to obtain exact nonnegativity certificates for polynomials lying in the interior of the SONC cone. 
A related investigation track is the complexity analysis and software implementation of the resulting hybrid numeric-symbolic scheme, as well as performance comparisons with concurrent methods based on semidefinite programming \cite{multivsos18} or geometric programming \cite{ma19}.

\bibliography{refer}

\appendix
\section{Algorithms}
\begin{algorithm}
\renewcommand{\algorithmicrequire}{\textbf{Input:}}
\renewcommand{\algorithmicensure}{\textbf{Output:}}
\caption{${\tt MedSeq}(p,q)$}\label{alg1}
\begin{algorithmic}[1]
\REQUIRE
$p,q\in\N,0<q<p$
\ENSURE
A sequence of tripes $\{(u_i,v_i,w_i)\}_i$ with $u_i=\frac{1}{2}(v_i+w_i)$ such that $\{0,p\}\cup\{u_i\}_i$ is a $(0,p)$-rational mediated sequence containing $q$
\STATE $u:=p$, $v:=q$, $w:=$gcd$(p,q)$;
\STATE $u:=\frac{u}{w}, v:=\frac{v}{w}$;
\IF{$2|u$}
  \IF{$v=\frac{u}{2}$}
  \STATE $A:=\{(1,0,2)\}$;
  \ELSE
  \IF{$v<u/2$}
    \STATE $A:={\tt MedSeq}(\frac{u}{2},v)\cup\{(\frac{u}{2},0,u)\}$;
    \ELSE
    \STATE $A:=\{(\frac{u}{2},0,u)\}\cup({\tt MedSeq}(\frac{u}{2},v-\frac{u}{2})+\frac{u}{2})$;
  \ENDIF
  \ENDIF
\ELSE
  \IF{$2|v$}
  \STATE Let $k,r\in\N^*$ such that $v=2^kr$ and $2\nmid r$;
    \IF{$v=u-r$}
    \STATE $A:=\{(\frac{1}{2}v,0,v),(\frac{3}{4}v,\frac{1}{2}v,v),\ldots,(v,v-r,u)\}$;
    \ELSE
      \IF{$v<u-r$}
      \STATE $A:=\{(\frac{1}{2}v,0,v),\ldots,(\frac{v-r+u}{2},v-r,u)\}\cup({\tt MedSeq}(\frac{u+r-v}{2},r)+v-r)$;
      \ELSE
      \STATE $A:=\{(\frac{1}{2}v,0,v),\ldots,(\frac{v-r+u}{2},v-r,u)\}\cup({\tt MedSeq}(\frac{u+r-v}{2},\frac{v+r-u}{2})+\frac{v+u-r}{2})$;
      \ENDIF
    \ENDIF
  \ELSE
  \STATE $A:=u-{\tt MedSeq}(u,u-v)$;
  \ENDIF
\ENDIF
\STATE \textbf{return} $wA$;
\end{algorithmic}
\end{algorithm}

\begin{algorithm}
\renewcommand{\algorithmicrequire}{\textbf{Input:}}
\renewcommand{\algorithmicensure}{\textbf{Output:}}
\caption{${\tt LMedSet}(\a_1,\a_2,\b)$}\label{alg2}
\begin{algorithmic}[1]
\REQUIRE
$\a_1,\a_2,\b\in\Q^n$ such that $\b$ lies on the line segment between $\a_1$ and $\a_2$
\ENSURE
A sequence of tripes $\{(\bu_i,\bv_i,\bw_i)\}_i$ with $\bu_i=\frac{1}{2}(\bv_i+\bw_i)$ such that $\{\a_1,\a_2\}\cup\{\bu_i\}_i$ is a $\{\a_1,\a_2\}$-rational mediated set containing $\b$
\STATE Let $\b=(1-\frac{q}{p})\a_1+\frac{q}{p}\a_2$, $p,q\in\N,0<q<p$,gcd$(p,q) = 1$;
\STATE $A:={\tt MedSeq}(p,q)$;
\STATE $M:=\cup_{(u,v,w)\in A}\{((1-\frac{u}{p})\a_1+\frac{u}{p}\a_2,(1-\frac{v}{p})\a_1+\frac{v}{p}\a_2,(1-\frac{w}{p})\a_1+\frac{w}{p}\a_2)\}$;
\STATE \textbf{return} $M$;
\end{algorithmic}
\end{algorithm}

\begin{algorithm}
\renewcommand{\algorithmicrequire}{\textbf{Input:}}
\renewcommand{\algorithmicensure}{\textbf{Output:}}
\caption{${\tt MedSet}(\A,\b)$}\label{alg3}
\begin{algorithmic}[1]
\REQUIRE
A trellis $\A=\{\a_1,\ldots,\a_m\}$ and a lattice point $\b\in\Conv(\A)^{\circ}$
\ENSURE
A sequence of tripes $\{(\bu_i,\bv_i,\bw_i)\}_i$ with $\bu_i=\frac{1}{2}(\bv_i+\bw_i)$ such that $\A\cup\{\bu_i\}_i$ is an $\A$-rational mediated set containing $\b$
\STATE Let $\b=\sum_{i=1}^m\frac{q_i}{p}\a_i$, where $p=\sum_{i=1}^mq_i$, $p,q_i\in\N^*$, $(p,q_1,\ldots,q_m)=1$;
\STATE $k:=1, \b_0:=\b$;
\WHILE{$k<m-1$}
\STATE $\b_k:=\frac{q_{k+1}}{p-(q_1+\cdots+q_k)}\a_{k+1}+\cdots+\frac{q_{m}}{p-(q_1+\cdots+q_k)}\a_{m}$;
\STATE $M_{k-1}:={\tt LMedSeq}(\a_{k},\b_{k},\b_{k-1})$;
\ENDWHILE
\STATE $M_{m-2}:={\tt LMedSeq}(\a_{m-1},\a_{m},\b_{m-2})$;
\STATE $M:=\cup_{i=0}^{m-2}M_i$;
\STATE \textbf{return} $M$;
\end{algorithmic}
\end{algorithm}

\begin{algorithm}
\renewcommand{\algorithmicrequire}{\textbf{Input:}}
\renewcommand{\algorithmicensure}{\textbf{Output:}}
\caption{{\tt SimplexCover($\Lambda(f), \Gamma(f)$)}}\label{alg0}
\begin{algorithmic}[1]
\REQUIRE
$\Lambda(f), \Gamma(f)$
\ENSURE
$\{(\A_k,\b_k)\}_k$: a set of pairs such that $\A_k\subseteq\Lambda(f)$ is a trellis and $\b_k\in\Gamma(f)\cap\Conv(\A_k)^{\circ}$
\STATE $U:=\Lambda(f), V:=\Gamma(f)$, $k:=0$;
\WHILE{$U\ne\emptyset$ and $V\ne\emptyset$}
\STATE $k:=k+1$;
\STATE Choose $\a_0\in U$ and $\b_k\in V$;
\STATE $\boldsymbol{\lambda}:=\textrm{SimSel}(\b_k,\Lambda(f),\a_0)$;
\STATE $\A_k:=\{\a\in\Lambda(f)\mid\lambda_{\a}>0\}$;
\STATE $U:=U\backslash\A_k$, $V:=V\backslash\{\b_k\}$;
\ENDWHILE
\IF{$V\ne\emptyset$}
\WHILE{$V\ne\emptyset$}
\IF{$U=\emptyset$}
\STATE $U:=\Lambda(f)$;
\ENDIF
\STATE $k:=k+1$;
\STATE Choose $\a_0\in U$ and $\b_k\in V$;
\STATE $\boldsymbol{\lambda}:=\textrm{SimSel}(\b_k,\Lambda(f),\a_0)$;
\STATE $\A_k:=\{\a\in\Lambda(f)\mid\lambda_{\a}>0\}$;
\STATE $U:=U\backslash\A_k$, $V:=V\backslash\{\b_k\}$;
\ENDWHILE
\ELSE
\WHILE{$U\ne\emptyset$}
\IF{$V=\emptyset$}
\STATE $V:=\Gamma(f)$;
\ENDIF
\STATE $k:=k+1$;
\STATE Choose $\a_0\in U$ and $\b_k\in V$;
\STATE $\boldsymbol{\lambda}:=\textrm{SimSel}(\b_k,\Lambda(f),\a_0)$;
\STATE $\A_k:=\{\a\in\Lambda(f)\mid\lambda_{\a}>0\}$;
\STATE $U:=U\backslash\A_k$, $V:=V\backslash\{\b_k\}$;
\ENDWHILE
\ENDIF
\STATE \textbf{return} $\{(\A_k,\b_k)\}_k$
\end{algorithmic}
\end{algorithm}

\end{document}